\tikzset{
  LabelStyle/.style = { rectangle, rounded corners, draw,
                        minimum width = 1em, 
                        font =  },
  VertexStyle/.append style = { inner sep=3pt,
                                font = \large},
  EdgeStyle/.append style = {->, bend left} }
\tikzset{
    ultra thick/.style={line width=5.0pt}
}
\theoremstyle{plain}
\newtheorem{theorem}{Theorem}
\newtheorem{prop}[theorem]{Proposition}
\newtheorem{lemma}[theorem]{Lemma}
\newtheorem{coro}[theorem]{Corollary}
\theoremstyle{definition}
\newtheorem{definition}[theorem]{Definition}
\newtheorem{remark}[theorem]{Remark}
\newtheorem{example}[theorem]{Example}
\newcommand{\Z}{{\mathbb Z}}
\newcommand{\Q}{{\mathbb Q}}
\newcommand{\N}{{\mathbb N}}
\newcommand{\mc}{\mathcal}
\newcommand{\freq}{\operatorname{freq}}
\newcommand{\Per}{\operatorname{Per}}
\newcommand{\Orb}{\operatorname{Orb}}
\newcommand{\per}{\operatorname{per}}
\newcommand{\dsub}{\varphi}
\begin{document}

\title[Periodic points in RS-subshifts]{
Periodic points in random substitution subshifts
}

\author[D.~Rust]{Dan Rust}
\date{\today}
\address{Fakult\"at f\"ur Mathematik, Universit\"at Bielefeld, \newline
\hspace*{\parindent}Postfach 100131, 33501 Bielefeld, Germany}
\email{drust@math.uni-bielefeld.de}

\begin{abstract}
We study various aspects of periodic points for random substitution subshifts.
In order to do so, we introduce a new property for random substitutions called the disjoint images condition.
We provide a procedure for determining the property for compatible random substitutions---random substitutions for which a well-defined abelianisation exists.
We find some simple necessary criteria for primitive, compatible random substitutions to admit periodic points in their subshifts.
In the case that the random substitution further has disjoint images and is of constant length, we provide a stronger criterion.
A method is outlined for enumerating periodic points of any specified length in a random substitution subshift. 

\end{abstract}

\keywords{Random substitutions, periodic points, topological entropy}

\subjclass[2010]{
37B10, 37A50, 37B40, 52C23
}

\maketitle

Random substitutions are a generalisation of the classical notion of a substitution on a finite alphabet.
In the classical setting, letters are mapped to words over the same alphabet, and then this map is iterated.
Dynamical systems associated with these classical substitutions are well studied and there is a large community devoted to solving some of the few remaining big problems in this area \cite{bible,DL:classification,F:book}.
In the setting of random substitutions, letters have a \emph{set of possible words} (often with an accompanying probability distribution) to which they may be independently mapped.
There, one must contend with all possible outcomes of iteration, where each letter of a word is mapped independently of all others.
This leads to an exponential growth in the number of words admitted in the language of a random substitution and a corresponding explosion of complexity for the associated subshift of bi-infinite sequences over that language.
Accordingly, the dynamical systems and tilings associated with random substitutions provide good models for quasicrystaline structures that have long range order induced by an underlying hierarchical supertile structure, whilst also possessing positive entropy.
Such models are highly sought-after in the world of solid-state physics \cite{GL:random} and have also proved useful for the study of molecular evolution \cite{K:thesis,MG:random-dna} where so-called expansion-modification systems are a model proposed to explain long-range correlations of sequences associated with DNA.

The recent study of random substitutions has lead to rapid advances in our understanding of various topological, dynamical and diffractive properties of their associated tilings and subshifts \cite{BM:noble-means,BSS:random,GRS:random-sft,MTU:random,RS:random}.
With this greater understanding has come a multitude of simple to state but non-trivial open problems, some of which were outlined in recent articles \cite{GRS:random-sft,RS:random}.
The purpose of this article is to tackle one particular aspect of the open problems presented there---namely the existence and enumeration of shift-periodic points.

Two of the simplest invariants for a topological dynamical systems are its topological entropy and its set of periodic points.
Any study of a newly defined class of dynamical systems should start with an attempt at understanding these features.
For subshifts of finite type for instance, both the entropy and periodic points are completely understood \cite{LM:introduction-to-symbolic} and prove to be useful for their further study and classification.
The entropy of random substitution subshifts has received recent attention \cite{BSS:random,M:diffraction-noble,N:fibonacci-entropy,N:random-one} where some examples have been explicitly calculated and some general results on entropy have also appeared \cite{GRS:random-sft,RS:random}.
This direction recently culminated in Gohlke's establishment of a general method for calculating topological entropy \cite{G:entropy} for compatible substitutions, where previous calculations relied on ad hoc methods.
His results allow for explicit converging bounds of entropy to be given and exact values in the case of certain large families.

The study of periodic points for random substitutions is comparatively unexplored.
For deterministic substitutions, Moss\'{e}'s celebrated result \cite{M:aperiodic} relating periodicity of the subshift with the notion of \emph{recognisability} highlights the importance of determining when a substitution admits periodic points in its subshift.
Determining when a primitive deterministic substitution has periodic points is relatively simple \cite{BR:grout} and the minimality of the subshift allows one to easily count the total number of periodic points.
In the case of random substitutions, the question of identifying the existence of periodic points is decidedly more difficult.
This is due to the intricate interplay between the shift dynamics and the long-range hierarchical structure induced by the inflation action of the random substitution, as well as the highly non-minimal nature of the subshift.
Our goal in this article is to understand this interplay and to lay the groundwork for the further study of periodic points.

In Section \ref{SEC:rs-subshift}, we introduce the basic notions of random substitutions and random substitution subshifts, here called \emph{RS-subshifts}. We discuss what it means to be compatible and primitive and we recall some basic properties of RS-subshifts from previous work \cite{RS:random}.

In Section \ref{SEC:disjoint-images}, we introduce a new property for random substitutions called the \emph{disjoint images} property.
The disjoint images property plays a key role in allowing natural preimage arguments to work for several proofs in Sections \ref{SEC:existence-periodic} and \ref{SEC:enumerating-periodic}.
We outline a general method for determining if and when a particular random substitution has disjoint images and we apply this method to examples.
We also discuss a weaker form of the property which is closely related to unique global recognisability.

In Section \ref{SEC:existence-periodic}, we study sufficient and necessary conditions for the existence of periodic points in RS-subshifts.
In particular we present some basic arguments, based on letter frequencies, in order to show that a primitive RS-subshift with periodic points must be associated with a random substitution whose expansion factor is an integer.
As long as we have disjoint images, we also show that periodic points in RS-subshifts have periodic substitutive preimages with least period no greater than the original point---this result is key to our concrete enumerations of periodic points.
The main result of the section is a simple sufficient criterion for the absence of periodic points in RS-subshifts associated with substitutions of constant length with disjoint images.

In Section \ref{SEC:enumerating-periodic}, we set the task of attempting to enumerate periodic points when they exist.
We provide a decidable procedure for determining if a particular word is a periodic block for some element in the RS-subshift of a compatible random substitution with disjoint images.
With a computer-assisted exact enumeration, we iteratively apply this procedure to words in the language of the \emph{random period doubling} substitution in order to enumerate all possible periodic blocks of length at most 30.

\section{Random substitution subshifts}\label{SEC:rs-subshift}
Let $\mc A = \{a_1, \ldots, a_d\}$ be a finite \emph{alphabet} whose elements are referred to as \emph{letters}.
Let $\mc A^n$ denote the set of \emph{words of length $n$} in $\mc A$ given by all concatenations of letters from $\mc A$ and for $u \in \mc A^n$, write $|u| = n$ for the length of the word $u$.
Let $\mc A^\ast = \bigcup_{n=0}^\infty \mc A^n$ denote the set of finite words in $\mc A$ with empty word $\varepsilon$ and let $\mc A^+ = \mc A^\ast \setminus {\varepsilon}$.
A word $u = u_0 \cdots u_k$ is a \emph{subword} of the word $v = v_0 \cdots v_{\ell}$ and we write $u \triangleleft v$ if there exists $j$ such that $u_i = v_{i+j}$ for $0 \leq i \leq k$.
We write $|v|_u = \#\{j \mid u_i = v_{i+j}, 0 \leq i \leq k\}$ to denote the number of occurrences of the word $u$ as a subword of the word $v$.
The \emph{cyclic permutation function} $\alpha \colon \mc A^* \to \mc A^*$ is given by $\alpha(u_1 u_2 \cdots u_k) = u_2 \cdots u_k u_1$.
Let $\mc A^\Z$ denote the set of bi-infinite sequences over the alphabet.
The set $\mc A^\Z$ forms a compact metrisable space under the product topology and the \emph{shift map} $\sigma \colon \mc A^\Z \to \mc A^\Z$ given by $\sigma(x)_n = x_{n+1}$ is a homeomorphism.

A function $\dsub \colon \mc A \to \mc A^+$ is called a \emph{deterministic substitution} and uniquely extends by concatenation to a morphism $\dsub \colon \mc A^\ast \to \mc A^\ast$.
Deterministic substitutions are well-studied \cite{bible, F:book}.
In contrast, a random substitution can take multiple values on a single letter $a \in \mc A$.
\begin{definition}
Let $\mc A $ be a finite alphabet, and let $\mc P (\mc A^+)$ denote the power set of $\mc A^+$.
A \emph{random substitution} on $\mc A$ is a map $\vartheta \colon \mc A \to \mc P(\mc A^{+})\setminus \varnothing$.
We say that $\vartheta$ has \emph{finite range} if $\# \vartheta (a)$ is a finite set of words for all $a \in \mc A$.
We call a word $u$ a \emph{realisation} of $\vartheta$ on $a$ if $u \in \vartheta(a)$.
If $u \triangleleft v$, with $v \in \vartheta(a)$, then we write $u \blacktriangleleft \vartheta(a)$.
\end{definition}
We will only be concerned with random substitutions with finite range and so we implicitly assume that all random substitutions from now on have finite range.
A word which can appear as a realisation $u \in \vartheta(a)$ for some $a \in \mc A$ is also called an \emph{inflation word}.

We can extend $\vartheta$ to a function $\mc A^{+} \to \mc P(\mc A^+)\setminus \varnothing$ by concatenation,
\[
\vartheta(a_1 \cdots a_m) = \vartheta(a_1) \cdots \vartheta(a_m) := \{ u_1 \cdots u_m \mid u_i \in \vartheta(a_i), \; 1 \leq i \leq m \},
\]
and consequently to a function $\vartheta \colon \mc P (\mc A^{+}) \setminus \varnothing \to \mc P (\mc A^+) \setminus \varnothing$ by $\vartheta(B) := \bigcup_{u \in B} \vartheta(u)$.
This then allows us to take powers of $\vartheta$ by composition, giving $\vartheta^k \colon \mc P (\mc A^{+}) \setminus \varnothing \to \mc P (\mc A^{+}) \setminus \varnothing $, for any $k \geq 0$, where $\vartheta^0 := \operatorname{id}_{\mc P(\mc A^+) \setminus \varnothing}$ is the identity and $\vartheta^{k+1} := \vartheta \circ \vartheta^k$.
We say that a word $u \in \mc A^{\ast}$ is \emph{$\vartheta$-legal} or just \emph{legal} if there is $k \in \N$ and $a \in \mc A$ such that $u \blacktriangleleft \vartheta^k(a)$.
We say that $\vartheta$ \emph{has constant length $\ell$} if there exists a natural number $\ell \geq 1$ such that $u \in \vartheta(a) \implies |u| = \ell$ for all $a \in \mc A$.
\begin{remark}
We should mention that the set $\vartheta(a)$ of realisations of the substitution on $a$ can be equipped with a probability distribution \cite{BSS:random,GL:random,RS:random}.
We have chosen to suppress the probabilities in this article for the sake of clarity.
The probabilities only affect measure theoretic properties and (under mild conditions) frequencies of words of length at least two.
Here, we are only interested in topological dynamics and frequencies of single letters respectively.
\end{remark}

\begin{example}\label{EX:fib}
The \emph{random Fibonacci substitution}, first studied by Godr\'{e}che and Luck \cite{GL:random}, is defined on the alphabet $\mc A = \{a,b\}$ by
\[
\vartheta \colon a \mapsto \{ab,ba\}, \: b \mapsto \{a\}.
\]
The next two iterates of the random Fibonacci substitution are then given by

\[
\begin{array}{rl}
\vartheta^2 \colon & a \mapsto \{aba,baa,aab\}, \:\: b \mapsto \{ab,ba\},\\
\vartheta^3 \colon & a \mapsto \{abaab,ababa,baaab,baaba,aabab,aabba,abbaa,babaa\},\\
                   & b \mapsto \{aba,baa,aab\}.
\end{array}
\]
So a realisation of $\vartheta^3(a)$ is given by $baaab \in \vartheta^3(a)$, hence the word $aaa \blacktriangleleft\vartheta^3(a)$ is $\vartheta$-legal.
\end{example}

Let $\#\mc A = d$.
Let $\psi \colon \mc A^\ast \to \N^{d}$ denote the abelianisation function which takes a word $u \in \mc A^\ast$ and enumerates the number of occurrences of each letter in $u$.
That is, $\psi(u) := (|u|_{a_1}, \ldots, |u|_{a_d})$.

\begin{definition}
Let $\vartheta \colon \mc A \to \mc P(\mc A^+)$ be a random substitution.
The substitution $\dsub \colon \mc A \to \mc A^+$ is a \emph{marginal} of $\vartheta$ if $\dsub(a) \in \vartheta(a)$ for every $a \in \mc A$.
A substitution is called \emph{compatible} if, for all $a \in \mc A$, the abelianisation vectors $\psi(\dsub(a))$ are independent of the choice of marginal $\dsub$ of $\vartheta$.
For a compatible substitution $\vartheta$, let $M_\vartheta$ denote the associated \emph{substitution matrix} where the entry $m_{ij}$ is given by $m_{ij}:=|\vartheta(a_j)|_{a_i}$ (which is well-defined by the compatibility of $\vartheta$).
If $M_\vartheta$ is primitive, we let $\lambda := \lambda_{PF}$, the Perron--Frobenius (PF) eigenvalue of $M_\vartheta$, denote the \emph{expansion factor} of $\vartheta$.
\end{definition}
Our use of the term `compatible' differs from previous usage such as in the work of Baake, Spindeler and Stungaru \cite{BSS:random} where they use the term `semi-compatible' and reserve the term `compatible' for a stronger concept which we will later call `strongly compatible'.
We feel the more fundamental property is that the abelianisation is well-defined which is why we have taken the step here to promote semi-compatibility to compatibility and propose this as the new standard.
\begin{example}
The random Fibonacci substitution of Example \ref{EX:fib} has exactly two marginals given by $\dsub_1\colon a \mapsto ab, b \mapsto a$ and $\dsub_2 \colon a \mapsto ba, b \mapsto a$.
The corresponding abelianisation vectors are independent of the chosen marginal and are given by $(|\vartheta(a)|)_{a_j \in \mc A} = (1,1)$ and $(|\vartheta(b)|)_{a_j \in \mc A} = (1,0)$.
The corresponding substitution matrix is given by
\[
M_\vartheta =
\begin{bmatrix}
1 & 1 \\
1 & 0 \\
\end{bmatrix}
\]
with expansion factor $\lambda = \frac{1+\sqrt{5}}{2}$, the golden ratio.

It happens that the subshifts of $\dsub_1$ and $\dsub_2$ are equal.
In such a case where $\vartheta$ is compatible and all marginals of $\vartheta$ have identical subshifts, we call $\vartheta$ \emph{strongly compatible}\footnote{Strongly compatible has previously been referred to as `compatible' in other places \cite{BSS:random}.}.
The random substitution $\vartheta \colon a \mapsto \{aab,baa\}, b \mapsto \{ab\}$ is compatible but is not strongly compatible---this can be seen by noting that the first marginal is conjugate to the square of the Fibonacci substitution, but the second marginal allows for the word $bb$ in its language.
\end{example}

\begin{definition}
The \emph{language} of a random substitution $\vartheta$ is the set of $\vartheta$-legal words,
\[
\mc L_{\vartheta} = \{u \blacktriangleleft \vartheta^k(a) \mid k \geq 0, a \in \mc A  \}.
\]
The set of length-$n$ legal words for $\vartheta$ is denoted by $\mc L^n_\vartheta := \mc L_\vartheta \cap \mc A^n$.
The \emph{random substitution subshift} of $\vartheta$ (\emph{RS-subshift}) is given by
\[
X_{\vartheta} = \{ w \in \mc A^{\Z} \mid u\triangleleft w \Rightarrow u \in \mc L_{\vartheta} \}.
\]
\end{definition}
It is easy to verify that $X_{\vartheta}$ forms a subshift---a closed, shift-invariant subspace of the full shift $\mc A^\Z$.
The following definition is standard and is satisfied by most interesting examples of random substitutions.
\begin{definition}
A random substitution $\vartheta$ on a finite alphabet $\mc A$ is called \emph{primitive} if there exists a $k \in \N$ such that for all $a_i, a_j \in \mc A$ we have $a_i \blacktriangleleft \vartheta^k(a_j)$.
If $\vartheta$ is a primitive random substitution, then we call the associated subshift $X_{\vartheta}$ a \emph{primitive RS-subshift}.
\end{definition}
Recall that a square integer matrix $M$ is called \emph{primitive} if there exists a natural number $k$ such that every entry of $M^k$ is positive.
A compatible RS-subshift is primitive if and only if the associated substitution matrix $M_\vartheta$ is primitive.

Note that, in contrast to the deterministic case, primitivity of a random substitution $\vartheta$ is not enough to conclude that the RS-subshift $X_{\vartheta}$ is non-empty, although it is true that a primitive, compatible random substitution gives rise to a non-empty RS-subshift.

Let $X = (X,\sigma)$ be a subshift on the alphabet $\mc A$.
We say that a point $x \in X$ is \emph{periodic of period} $p \geq 1$ if $\sigma^p(x) = x$ and write $\Per_p(X) = \{x \in X \mid x \text{ has period }p\}$.
Let $\Per(X) = \bigcup_{p \geq 1} \Per_p(X)$.
If $x$ has period $p$ and no smaller periods then we say that the \emph{prime period} of $x$ is $p$ and write $\per(x) = p$.
The \emph{minimal period} of $X$ is the smallest natural number $p_{min}$ such that there exists $x \in X$ with $\per(x) = p_{min}$.

Many results relating to the combinatorics and dynamics of primitive RS-subshifts were presented in previous work of Timo Spindeler and the author \cite{RS:random}.
We briefly mention some of those results that will be useful later.
\begin{prop}[\cite{RS:random}]\label{PROP:rs-subs}
Let $\vartheta$ be a random substitution with associated RS-subshift $X_\vartheta$.
Then:
\begin{itemize}
\item $X_\vartheta$ is closed under substitution.
That is, if $x \in X_\vartheta$, then $y \in \vartheta(x) \implies y \in X_\vartheta$ .
\item $X_\vartheta$ is closed under taking preimages.
That is, if $x \in X_\vartheta$, then there exists an element $y \in X_\vartheta$ and $0 \leq k \leq \max\{|u| : u \in \vartheta(y_0)\} - 1$ such that $\sigma^{-k}(x) \in \vartheta(y)$.
\item If $\vartheta$ is primitive, then the set of periodic elements in $X_\vartheta$ is either empty or dense.
\end{itemize}
\end{prop}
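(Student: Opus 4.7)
The plan is to address the three bullets separately; (1) and (2) are bookkeeping from the definitions of $\mc L_\vartheta$ and $X_\vartheta$, while (3) is where primitivity enters substantively. For (1), given $x \in X_\vartheta$ and $y \in \vartheta(x)$, any finite subword $u \triangleleft y$ appears inside some specific realisation in $\vartheta(v)$ for a finite subword $v = x_i \cdots x_j \triangleleft x$; since $v$ is $\vartheta$-legal, $v \blacktriangleleft \vartheta^k(a)$ for some $k, a$, whence $u \blacktriangleleft \vartheta^{k+1}(a)$, so $u \in \mc L_\vartheta$ and thus $y \in X_\vartheta$.

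For (2) I would use a compactness/diagonalisation argument. Let $L = \max_{a \in \mc A}\max_{u \in \vartheta(a)}|u|$, which is finite by the finite-range assumption. For each $N$, the window $x_{-N}\cdots x_N$ is legal, hence a subword of $\vartheta^{k_N}(a_N)$ for some $k_N \geq 1$; peeling off one layer of $\vartheta$ yields a finite $\vartheta$-legal parent $v^{(N)}$ (equipped with a distinguished central position marking the letter whose image covers $x_0$) and an offset $0 \leq k^{(N)} \leq L-1$ such that $x_{-N}\cdots x_N$ sits inside some realisation of $\vartheta(v^{(N)})$ at that offset. Since the alphabet and the range of offsets are finite, a standard diagonal extraction produces a subsequence $N_j$ along which $k^{(N_j)}$ is constant equal to some $k$ and each recentred coordinate $v^{(N_j)}_i$ stabilises, yielding a bi-infinite sequence $y$. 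Each finite subword of $y$ eventually appears in some $v^{(N_j)}$ and is therefore legal, so $y \in X_\vartheta$; and the finite identifications pass to the limit, giving $\sigma^{-k}(x) \in \vartheta(y)$.

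For (3), assume $p \in X_\vartheta$ is periodic of prime period $q$, so $p = \ldots www\ldots$ for some $\vartheta$-legal word $w$ of length $q$. Fix $x \in X_\vartheta$ and $n \geq 0$, and set $u = x_{-n}\cdots x_n$; the plan is to produce a periodic point of $X_\vartheta$ containing $u$ as a subword and then shift it so that this occurrence lands on $[-n,n]$. Since $u$ is legal, there are $N$ and $c \in \mc A$ with $u \blacktriangleleft \vartheta^N(c)$; by primitivity there is $M \geq N$ such that $c \blacktriangleleft \vartheta^{M-N}(b)$ for every $b \in \mc A$, and in particular $u$ appears in some realisation $r_0 \in \vartheta^M(w_0)$, where $w_0$ is the first letter of $w$. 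I now fix, once and for all, a single realisation of $\vartheta^M$ on each letter of $w$, choosing $r_0$ on $w_0$, and concatenate to form a specific $R \in \vartheta^M(w)$. Applying these same fixed choices to \emph{every} copy of $w$ inside $p$ produces a point $y \in \vartheta^M(p) \subseteq X_\vartheta$ (the inclusion by iterating (1)) which is periodic of period $|R|$ and contains $u$ as a subword; an appropriate shift delivers the desired periodic point agreeing with $x$ on $[-n,n]$.

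The main obstacle lies in (3): RS-subshifts are generically non-minimal, so the orbit of any single periodic $p$ is a finite set and cannot, by itself, approximate an arbitrary $x \in X_\vartheta$; one therefore cannot just mimic the deterministic argument. The workaround is to use $\vartheta^M(p)$ as a vastly enlarged ``orbit'', with primitivity guaranteeing that any prescribed legal word $u$ can be planted inside some realisation of $\vartheta^M(b)$ for any letter $b$. The subtlety is that the realisations must be chosen \emph{consistently} across all repeated $w$-blocks—independent block-by-block choices would typically destroy periodicity—which is why one must freeze a single global choice of $R \in \vartheta^M(w)$ before replicating it across $p$.
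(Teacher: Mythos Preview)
The paper does not actually supply a proof of this proposition: it is quoted from prior work \cite{RS:random} and stated without argument. Your write-up is therefore being compared against a citation rather than a proof, and on its own merits it is sound. Part (1) is the routine observation that $v \blacktriangleleft \vartheta^k(a)$ implies $\vartheta(v) \subset$ subwords of $\vartheta^{k+1}(a)$. Part (2) is the standard compactness extraction; note that the requirement $k_N \geq 1$ is automatic once the window has length $\geq 2$, and that the eventual bound $0 \leq k \leq \max\{|u| : u \in \vartheta(y_0)\}-1$ (rather than the cruder global bound $L-1$) falls out because your offset $k^{(N)}$ records the position of $x_0$ inside the image of the distinguished central letter, which in the limit is $y_0$. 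Part (3) is exactly the argument one expects---plant the target word $u$ inside some $\vartheta^M(w_0)$ via primitivity, then replicate a fixed realisation $R \in \vartheta^M(w)$ across the periodic block---and your explicit remark about freezing a single global choice of $R$ to preserve periodicity is the right point to emphasise. This is essentially the approach taken in the cited reference.
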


Examples exist of RS-subshifts both with periodic points and without periodic points.
Before moving on to tackling questions about periodic points, we need to introduce and study an important property of random substitutions.

\section{Disjoint Images}\label{SEC:disjoint-images}
A condition called \emph{disjoint sets} for random substitutions was first introduced by Gohlke \cite{G:entropy} where its utility was immediately realised in the calculation of topological entropy for primitive compatible random substitution.
%
We will make use of a slightly stronger related property called the disjoint images property in Sections \ref{SEC:existence-periodic} and \ref{SEC:enumerating-periodic}.
We therefore introduce the property and prove several results allowing one to determine whether a substitution has disjoint images or not.

\begin{definition}
Let $\vartheta$ be a random substitution and let $x \in X_\vartheta$.
We say that a random substitution has \emph{disjoint images} if for all $u, v \in \mc L_\vartheta$,
\[
\vartheta(u) \cap \vartheta(v) \neq \varnothing \implies u = v.
\]
\end{definition}

\begin{example}
Let $\vartheta \colon a \mapsto \{ab,ba\}, b \mapsto \{a\}$ be the random Fibonacci substitution.
We quickly see that $\vartheta$ does not have disjoint images as $\vartheta(ab) \cap \vartheta(ba) = \{aba\} \neq \varnothing$.
\end{example}
The disjoint images condition is stable under taking powers of the substitution.
\begin{prop}
The random substitution $\vartheta$ has disjoint images if and only if $\vartheta^k$ has disjoint images for all $k \geq 1$.
\end{prop}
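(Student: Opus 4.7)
The plan is a short induction on $k$, with the forward implication being the substantive half and the reverse being immediate.

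The reverse direction ($\Leftarrow$) is trivial: specialising the assumption to $k=1$ gives that $\vartheta$ itself has disjoint images. So I focus on the forward direction.

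For the forward direction ($\Rightarrow$), I would proceed by induction on $k$, with the base case $k = 1$ being the hypothesis. For the inductive step, assume $\vartheta^k$ has disjoint images and suppose $u, v \in \mc L_\vartheta$ satisfy $\vartheta^{k+1}(u) \cap \vartheta^{k+1}(v) \neq \varnothing$. Using the composition identity $\vartheta^{k+1} = \vartheta \circ \vartheta^k$, I can pick a witness $w$ in the intersection and unfold it to produce $u' \in \vartheta^k(u)$ and $v' \in \vartheta^k(v)$ with $w \in \vartheta(u') \cap \vartheta(v')$. Applying disjoint images of $\vartheta$ forces $u' = v'$, which in turn gives $\vartheta^k(u) \cap \vartheta^k(v) \neq \varnothing$, and the inductive hypothesis then delivers $u = v$.

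The only subtlety—and the step I would take care with—is verifying that the intermediate words $u', v'$ are legal, since the disjoint images condition is only quantified over $\mc L_\vartheta$. This is routine: since $u \in \mc L_\vartheta$, there exist $k_0$ and $a \in \mc A$ with $u \blacktriangleleft \vartheta^{k_0}(a)$, so $u$ is a subword of some $z \in \vartheta^{k_0}(a)$; then every element of $\vartheta^k(u)$ is a subword of some element of $\vartheta^{k+k_0}(a)$, hence lies in $\mc L_\vartheta$. The same argument applies to $v'$, which justifies the use of disjoint images of $\vartheta$ at the critical step.

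I do not anticipate any real obstacle here; the statement is essentially a bookkeeping lemma, with the only care required being the legality check above. The whole proof should fit in a few lines.
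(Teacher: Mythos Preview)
Your proposal is correct and follows essentially the same route as the paper: both argue by induction using the decomposition $\vartheta^{k+1}=\vartheta\circ\vartheta^k$, and both explicitly note that the intermediate words $u',v'$ (the paper's $\tilde u,\tilde v$) are legal because $u,v$ are. The only cosmetic difference is that the paper phrases the inductive step contrapositively (starting from $u\neq v$ and showing $\vartheta^{k+1}(u)\cap\vartheta^{k+1}(v)=\varnothing$), whereas you argue directly from a witness in the intersection.
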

\begin{proof}
The right-to-left direction is immediate.
Let $\vartheta$ be a random substitution with disjoint images and suppose that $\vartheta^k$ has disjoint images for some $k \geq 1$.
Let $u, v \in \mc L_\vartheta$ be two distinct legal words.
As $\vartheta^k$ has disjoint images, then $\vartheta^k(u) \cap \vartheta^k(v) = \varnothing$.
Let $\tilde{u} \in \vartheta^k(u)$ and $\tilde{v} \in \vartheta^k(v)$, which are both legal words as $u$ and $v$ are legal.
As $\vartheta$ has disjoint images, then $\vartheta(\tilde{u}) \cap \vartheta(\tilde{v}) = \varnothing$.
We have chosen $\tilde{u}$ and $\tilde{v}$ arbitrarily and so, noting that
\[
\bigcup_{\tilde{u} \in \vartheta^k(u)} \vartheta(\tilde{u}) = \vartheta^{k+1}(u) \quad \text{ and } \quad \bigcup_{\tilde{v} \in \vartheta^k(v)} \vartheta(\tilde{v}) = \vartheta^{k+1}(v),
\]
it follows that $\vartheta^{k+1}(u) \cap \vartheta^{k+1}(v) = \varnothing$.
By induction then, $\vartheta^k$ has disjoint images for all $k \geq 1$.
\end{proof}

The disjoint images condition is non-trivial to determine for random substitutions in general.
For random substitutions of constant length however, there is a simple equivalent criterion.

\begin{prop}\label{PROP:const-length-dis-images}
Let $\vartheta$ be a constant length random substitution.
The substitution $\vartheta$ has disjoint images if and only if $\vartheta(a) \cap \vartheta(b) = \varnothing$ for all distinct pairs of letters $a, b \in \mc A$,.
\end{prop}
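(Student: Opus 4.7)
The forward direction is essentially trivial: if $\vartheta$ has disjoint images, then applying the definition with $u = a$ and $v = b$ for distinct letters $a, b \in \mc A$ (both of which we may take to be in $\mc L_\vartheta$, since otherwise the claim is vacuous for that pair) immediately gives $\vartheta(a) \cap \vartheta(b) = \varnothing$.

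For the reverse direction, the plan is to exploit rigid alignment coming from constant length. Suppose $\vartheta$ has constant length $\ell$ and $\vartheta(a) \cap \vartheta(b) = \varnothing$ for all distinct pairs $a, b$. Take legal words $u = u_1 \cdots u_n$ and $v = v_1 \cdots v_m$ with $\vartheta(u) \cap \vartheta(v) \neq \varnothing$ and pick some $w$ in the intersection. The length of $w$ satisfies $|w| = n\ell = m\ell$, so immediately $n = m$.

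Next, write $w = w^{(1)} w^{(2)} \cdots w^{(n)}$ and $w = \tilde{w}^{(1)} \tilde{w}^{(2)} \cdots \tilde{w}^{(n)}$ where each block has length $\ell$, with $w^{(i)} \in \vartheta(u_i)$ from $w \in \vartheta(u)$ and $\tilde{w}^{(i)} \in \vartheta(v_i)$ from $w \in \vartheta(v)$. Since both decompositions consist of contiguous blocks of the common length $\ell$ starting at the same position, the blocks agree position by position: $w^{(i)} = \tilde{w}^{(i)}$ for each $i$. Hence $w^{(i)} \in \vartheta(u_i) \cap \vartheta(v_i)$, which is non-empty, so the hypothesis forces $u_i = v_i$. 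Concatenating gives $u = v$, and therefore $\vartheta$ has disjoint images.

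The argument is essentially a bookkeeping exercise, and the only real step of substance is the observation that constant length forces the two interpretations of $w$ as a $\vartheta$-image to cut $w$ into the same $\ell$-blocks, so that the single-letter hypothesis can be applied coordinatewise. I do not expect any genuine obstacle here; the contrast with the general case (cf.\ the Fibonacci example) is exactly that non-constant length allows the two factorisations of $w$ to be misaligned, which is precisely what this proof rules out.
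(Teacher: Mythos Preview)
Your proof is correct and follows essentially the same approach as the paper's: both argue that constant length $\ell$ forces $|u|=|v|$ and that the unique decomposition of $w$ into consecutive $\ell$-blocks lets the single-letter hypothesis be applied coordinatewise to conclude $u_i=v_i$ for each $i$. The only cosmetic difference is that you phrase the block comparison via two labelled factorisations $w^{(i)}$ and $\tilde w^{(i)}$, whereas the paper speaks of the unique letter $a$ with $w_{[\ell i,\ell(i+1)-1]}\in\vartheta(a)$; the content is identical.
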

\begin{proof}
Let $\vartheta$ have constant length $\ell$.
It is clear that if $\vartheta$ has disjoint images, then $\vartheta(a) \cap \vartheta(b) \neq \varnothing$ implies that $a = b$.

For the other direction, let $u, v \in \mc A^+$, suppose $\vartheta(u) \cap \vartheta(v) \neq \varnothing$ and let $w \in \vartheta(u) \cap \vartheta(v)$.
Then, first, as $\vartheta$ has constant length $\ell$, it means that $|u| = |v| = |w|/\ell$.
Consider the word $w_{[\ell i, \ell (i + 1) - 1]}$, which must be an exact inflation word due to its positioning within $w$ and the fact that $\vartheta$ has constant length $\ell$.
It follows that there exists a unique $a \in \mc A$ such that $w_{[\ell i, \ell (i + 1) - 1]} \in \vartheta(a)$ and so the $i$th letter of the preimage of $w$ is uniquely determined for all $i$.
Hence, $u = v$ and so $\vartheta$ has disjoint images.
\end{proof}

\begin{example}\label{EX:rpd}
Let $\vartheta \colon a \mapsto \{ab,ba\}, b \mapsto \{aa\}$ be the \emph{random period doubling substitution}, first studied by Hu, Tian and Wang \cite{HTW:rand-per-doub}.
This substitution has constant length $\ell = 2$.
We note that $\vartheta(a) \cap \vartheta(b) = \varnothing$ and so $\vartheta$ has disjoint images by Proposition \ref{PROP:const-length-dis-images}.
\end{example}

For substitutions of non-constant length, there is a more involved procedure for determining whether it has disjoint images or not as a consequence of the following results.

\begin{lemma}\label{LEM:disj-images-alg}
Let $\vartheta$ be a random substitution for which $u, v \in \vartheta(a) \implies |u| = |v|$ for all $a \in \mc A$.
If $\vartheta$ does not have disjoint images, then there exist distinct letters $a, b \in \mc A$ and $w_a \in \vartheta(a)$, $w_b \in \vartheta(b)$ such that $w_a$ is a prefix of $w_b$.
\end{lemma}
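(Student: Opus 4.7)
The plan is to contrapose and extract witnesses from the failure of disjoint images. Suppose $\vartheta$ does not have disjoint images, so there exist distinct legal words $u = u_1 \cdots u_m$ and $v = v_1 \cdots v_n$ together with a common realisation $w \in \vartheta(u) \cap \vartheta(v)$. The standing hypothesis that $u, v \in \vartheta(a) \implies |u| = |v|$ lets me unambiguously write $\ell_a$ for the common length of realisations of $\vartheta(a)$. Then $w$ admits two factorisations $w = x_1 \cdots x_m = y_1 \cdots y_n$ with $x_i \in \vartheta(u_i)$ of length $\ell_{u_i}$ and $y_j \in \vartheta(v_j)$ of length $\ell_{v_j}$, and the whole proof comes down to comparing these two factorisations letter-block by letter-block.

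First I would rule out the degenerate possibility that one of $u, v$ is a proper prefix of the other. If, say, $u$ is a proper prefix of $v$, then the two factorisations give $\sum_{i=1}^{m} \ell_{u_i} = |w| = \sum_{j=1}^{n} \ell_{v_j}$, while each $\ell_a \geq 1$ forces the right-hand sum to exceed the left, a contradiction. Hence $u$ and $v$ must diverge at some index both have in common; let $k$ be the smallest such index, so $u_i = v_i$ for $i < k$ but $u_k \neq v_k$ with both defined.

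Next I would argue that the blocks line up through position $k-1$: for $i < k$, $x_i$ and $y_i$ are realisations of the same letter $u_i$, have the common length $\ell_{u_i}$, and occupy the same interval of $w$, so $x_i = y_i$. Consequently $x_k$ and $y_k$ begin at the same position of $w$. After possibly swapping the roles of $u$ and $v$ to arrange $\ell_{u_k} \leq \ell_{v_k}$, the block $x_k$ sits as a prefix of $y_k$ inside $w$. Setting $a = u_k$, $b = v_k$, $w_a = x_k$, and $w_b = y_k$ then produces distinct letters with $w_a \in \vartheta(a)$, $w_b \in \vartheta(b)$, and $w_a$ a prefix of $w_b$, as required.

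The only obstacle worth flagging is the bookkeeping between the two factorisations, in particular making sure the proper-prefix degenerate case is eliminated before extracting the first point of disagreement; the remainder is a direct reading-off of an inflation-word prefix pair from that disagreement.
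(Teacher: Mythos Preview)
Your proof is correct and follows essentially the same approach as the paper: rule out the proper-prefix case by a length count, then locate the first index where $u$ and $v$ disagree and read off the desired inflation-word prefix pair there. The paper phrases this as an iteration (check position $0$, move to position $1$, etc.) whereas you jump directly to the minimal index $k$ of disagreement, but this is purely a presentational difference.
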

\begin{proof}
As $\vartheta$ does not have disjoint images, let $u, v \in \mc{L}_\vartheta$ be distinct legal words and suppose $w \in \vartheta(u) \cap \vartheta(v)$.
First, note that as $u \neq v$, then $w \in \vartheta(u) \cap \vartheta(v)$ means that $u$ is not a prefix of $v$ or vice versa---this is because if $u$ is a strict prefix of $v$, then $|\vartheta(u)| < |\vartheta(v)|$ by our assumption on all substituted images of letters having the same length, hence $w$ cannot be in both $\vartheta(u)$ and $\vartheta(v)$ at the same time.

As $w \in \vartheta(u)$, there exists a word $w_{u_0} \in \vartheta(u_0)$ such that $w_{u_0}$ is a prefix of $w$.
As $w \in \vartheta(v)$, there exists a word $w_{v_0} \in \vartheta(v_0)$ such that $w_{v_0}$ is a prefix of $w$.
So one of $w_{u_0}$ and $w_{v_0}$ is a prefix of the other.
Either $u_0$ and $v_0$ are not equal, in which case we are done, else $u_0 = v_0$ and so, because every word in $\vartheta(u_0)$ has the same length, we must have $w_{u_0} = w_{v_0}$.

We therefore move to $u_1$ and $v_1$ and repeat the process.
At some point, we must eventually find a position in $u$ and $v$ where they differ because $u$ and $v$ are not equal and $u$ cannot be a prefix of $v$ (or vice versa).
Therefore, at some point we find a position $i$ in $u$ and $v$ where all previous letters $u_j = v_j$ are equal for $j < i$ and map to the same word in $w$, but $u_i \neq v_i$, hence either $w_{u_i} \in \vartheta(u_i)$ is a prefix of $w_{v_i} \in \vartheta(v_i)$ or vice versa.
\end{proof}
Note that if $\vartheta$ is compatible, then it is certainly true that $u, v \in \vartheta(a) \implies |u| = |v|$ for all $a \in \mc A$, and so Lemma \ref{LEM:disj-images-alg} applies to compatible substitutions.
\begin{theorem}\label{THM:disj-images-alg}
Let $\vartheta$ be a random substitution for which $u, v \in \vartheta(a)$ implies that $|u| = |v|$ for all $a \in \mc A$.
There is a decidable procedure to determine if $\vartheta$ has disjoint images or not.
\end{theorem}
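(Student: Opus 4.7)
The plan is to reduce the question to a reachability problem in a finite directed graph constructed from the inflation words of $\vartheta$. Lemma \ref{LEM:disj-images-alg} is the key input: any collision $w \in \vartheta(u) \cap \vartheta(v)$ between distinct legal words must be detected \emph{locally} at the leftmost position $i$ where $u_i \neq v_i$, where two inflation words $w_{u_i} \in \vartheta(u_i)$ and $w_{v_i} \in \vartheta(v_i)$ on different letters must stand in a prefix relation. The algorithm then searches, among all possible such initial ``seeds'', for one that can be extended step-by-step until the two sides synchronise.

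Concretely, I would build a directed graph $G$ whose vertices are the non-empty strict suffixes of inflation words, each tagged by a bit indicating which of the two constructed words is currently lagging, together with a distinguished terminal vertex $\ast$ representing a synchronised collision. The edges out of a vertex $(s,\text{$u$-lagging})$ are determined by choosing the next letter $c$ on the $u$-side together with an inflation $w_c \in \vartheta(c)$ and aligning $w_c$ against $s$: if $w_c$ is a strict prefix of $s$, pass to $(s[|w_c|{:}],\text{$u$-lagging})$; if $s$ is a strict prefix of $w_c$, flip the bit and pass to $(w_c[|s|{:}],\text{$v$-lagging})$; if $w_c = s$, pass to $\ast$; otherwise no edge is drawn. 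Symmetric rules apply when $v$ is lagging. The source vertices of the search are exactly the overhangs arising from pairs of distinct letters $a \neq b$ and prefix-related $w_a \in \vartheta(a)$, $w_b \in \vartheta(b)$ (with the degenerate case $w_a = w_b$ already giving a witness). Because $\vartheta$ has finite range and all inflation words are bounded in length, $G$ has finitely many vertices, so reachability of $\ast$ from a source vertex is decidable by standard breadth-first search. By construction, a path from a source to $\ast$ corresponds exactly to a word $w$ realised simultaneously as $w \in \vartheta(u)$ and $w \in \vartheta(v)$ for some $u \neq v$, and conversely any such collision yields such a path.

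The main obstacle I expect is ensuring that the $u, v$ reconstructed from a reachable path are genuinely \emph{legal}, rather than merely words over $\mc A$. This can be handled by restricting the transitions in $G$ to use only letters that are themselves legal, which is a finite check (a letter is legal iff it is reachable in the letter-to-letter graph of $\vartheta$), and, when needed, enriching the graph's state with a short sliding window of recent letters that is verified against $\mc L^n_\vartheta$ for some $n$ computable from the graph diameter. For primitive $\vartheta$ every letter is legal, so no extra bookkeeping is required in the main case of interest.
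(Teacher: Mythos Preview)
Your overhang-reachability graph is exactly the paper's procedure rephrased. The paper starts from each quadruple $(a,b;w_a,w_b)$ with $w_a$ a prefix of $w_b$, forms the remainder $w'_a$, and repeatedly tests whether some inflation word $w_c$ stands in a prefix relation with the current remainder, updating the remainder accordingly; it terminates when the remainder becomes empty (your $\ast$), when no inflation word fits (dead end), or when a remainder repeats (your finiteness of the vertex set). So the algorithmic core is the same, with your graph formulation being a clean repackaging.

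The gap is in your treatment of legality. Disjoint images is quantified over $u,v\in\mc L_\vartheta$, so a path to $\ast$ only witnesses a failure if the reconstructed $u$ and $v$ are legal. Neither of your two fixes achieves this. Enriching the state with a length-$n$ sliding window and checking it against $\mc L^n_\vartheta$ only certifies that every length-$n$ \emph{subword} of $u$ and of $v$ is legal; since $\mc L_\vartheta$ is in general not the language of a subshift of finite type, no finite $n$ makes this equivalent to $u,v\in\mc L_\vartheta$. And the claim that for primitive $\vartheta$ ``no extra bookkeeping is required'' because every letter is legal is simply wrong: legality of each letter says nothing about legality of a word built from those letters, so a collision between arbitrary $u\neq v$ need not violate disjoint images. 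The paper instead checks legality of the reconstructed words directly whenever a synchronisation is reached (``and so if $ac\in\mc L_\vartheta$, then $\vartheta$ does not have disjoint images''); since simple paths in your graph have bounded length and $\mc L^n_\vartheta$ is computable for each fixed $n$, this direct check is effective. Replace your two proposed fixes with that check.
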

\begin{proof}
We should first check whether there exist distinct letters $a, b \in \mc A$ and $w_a \in \vartheta(a)$, $w_b \in \vartheta(b)$ such that $w_a$ is a prefix of $w_b$.
If not, then $\vartheta$ has disjoint images by Lemma \ref{LEM:disj-images-alg}.
Suppose then that such letters $a,b$ and words $w_a, w_b$ exist and collect all such valid letters and words into a quadruple $Q = (a,b;w_a,w_b)$.
The following procedure should be carried out for all valid quadruples.

Let $w'_a$ be the word such that $w_aw'_a = w_b$.
If $w'_a = \varepsilon$, the empty word, then
\[
w_a = w_b \in \vartheta(a) \cap \vartheta(b)
\]
and so $\vartheta$ does not have disjoint images, so we are done.
If $w'_a \neq \varepsilon$, then we should now check if there is some $c \in \mc A$ and $w_c \in \vartheta(c)$ for which $w'_a$ is the prefix of $w_c$ or $w_c$ is the prefix of $w'_a$.
If not, then there is no valid way for a word $w$ to be in the substitutive image of two different words where one begins with $a$ and one begins with $b$, as $a$ cannot be extended to the right, so this quadruple $Q$ does not lead to a pair of words with non-disjoint image.

If $w'_a$ is the prefix of some word $w_c \in \vartheta(c)$ for some $c \in \mc A$ then let $w''_a$ be the word such that $w'_a w''_a = w_c$ (we must check all possible valid pairs $(c,w_c)$ for which this occurs).
Again, either $w''_a = \varepsilon$ in which case $w_b = w_a w'_a = w_a w'_a w''_a = w_a w_c  \in \vartheta(b) \cap \vartheta(ac)$ and so if $ac \in \mc L_\vartheta$, then $\vartheta$ does not have disjoint images, or else we should check if there is some $d \in \mc A$ and $w_d \in \vartheta(d)$ for which $w''_a$ is the prefix of $w_d$ or $w_d$ is the prefix of $w''_a$.

If some word $w_c \in \vartheta(c)$ is the prefix of $w'_a$ for some $c \in \mc A$ then let $w'_c$ be the word such that $w_c w'_c = w'_a$ and again perform the above checks in the same way.

By repeating the above procedure, we either show that $Q$ does not lead to a pair of legal words with disjoint image, or it does, or we enter a loop, because there are only finitely many subwords $w^{(n)}_a$ of inflation words in $\vartheta(\mc A)$.
If we enter such a loop and return to having to check for the same subword of an inflation word again without finding a pair of words with disjoint image, then we know that $Q$ will never lead to such a pair.
\end{proof}
%
We illustrate the above algorithm with two examples.
\begin{example}
Let $\vartheta$ be a compatible random substitution on $\{0,1,2\}$ given by 
\[
\vartheta \colon 0 \mapsto \{0102, 1200, 0012\}, 1 \mapsto \{010\}, 2 \mapsto \{20102010\}.
\]
We notice that $w_1 = 010 \in \vartheta(1)$ is a prefix of $w_0 = 0102 \in \vartheta(0)$ and there are no other valid prefixes.
Our only quadruple to check is then $Q = (1, 0; 010, 0102)$.
Setting $w'_1 = 2$, we have $w_1 w'_1 = w_0$ and $w'_1$ is a prefix of $w_2 = 20102010 \in \vartheta(2)$.
Setting $w'_2 = 0102010$, we have that $w'_2$ is not the prefix of any $w_c \in \vartheta(\mc A)$, however $w''_0 = 0102 \in \vartheta(0)$ is a prefix of $w'_2$ leaving the remainder $w''_2 = 010$ as a suffix of $w_2$ which is in $\vartheta(1)$ with no remainder left over.

It follows that $01020102010 \in \vartheta(12) \cap \vartheta(001)$.
As both $12$ and $001$ are legal words for $\vartheta$, $\vartheta$ does not have disjoint images.
\end{example}
\begin{example}
Let $\vartheta$ be a compatible random substitution on $\{0,1\}$ given by 
\[
\vartheta \colon 0 \mapsto \{010, 100\}, 1 \mapsto \{0101\}.
\]
We notice that $w_0 = 010 \in \vartheta(0)$ is a prefix of $w_1 = 0101 \in \vartheta(1)$ and there are no other valid prefixes.
Our only quadruple to check is then $Q = (0, 1; 010, 0101)$.
Setting $w'_0 = 1$, we have $w_0 w'_0 = w_1$ and $w'_0$ is a prefix of $w''_0 = 100 \in \vartheta(0)$.
Setting $w'''_0 = 00$, we have that $w'''_0$ is not the prefix of any $w_c \in \vartheta(\mc A)$ and no $w_c \in \vartheta(\mc A)$ is a prefix of $w'''_0$.
It follows that the quadruple $Q$ does not lead to any non-disjoint images and so $\vartheta$ has disjoint images by Theorem \ref{THM:disj-images-alg}.
\end{example}

There is a close link between the disjoint images property and the classical notion of unique (global) recognisability \cite{M:aperiodic}.
Recall the definition of unique recognisability for a deterministic substitution:

Let $\phi$ be a deterministic substitution and let $X_\phi$ be the associated subshift for $\phi$.
We say that $\phi$ is \emph{globally uniquely recognisable} if for all $x \in X_\phi$, there exists a unique $y \in X_\phi$ such that $\sigma^{-k}(x) = \phi(y)$ and $0 \leq k \leq |\phi(y_0)| - 1$.

For $\phi$ a primitive deterministic substitution, it was shown by Moss\'{e} \cite{M:aperiodic} that if $X_\phi$ is aperiodic (has infinitely many elements) then $\phi$ is globally uniquely recognisable (the converse is very simple---see Proposition \ref{PROP:recog}).
We can extend the definition of global unique recognisability to compatible random substitutions in a natural way.
\begin{definition}
Let $\vartheta$ be a compatible random substitution.
We say that $\vartheta$ is \emph{globally uniquely recognisable} if for all $x \in X_\vartheta$, there exists a unique $y \in X_\phi$ such that $\sigma^{-k}(x) \in \vartheta(y)$ and $0 \leq k \leq |\vartheta(y_0)| - 1$.
\end{definition}
\begin{example}
Let $\vartheta$ be the compatible random substitution on $\{a,b\}$ given by
\[
\vartheta \colon a \mapsto \{abbabba, ababbba\}, b \mapsto \{a\}.
\]
For any element $x$ of $X_\vartheta$, any subword $u \triangleleft x$ of the form $u = abbabba$ must have come from an $a$ in a preimage of $x$ as there is no other concatenation of inflation words that contains $u$ as a subword.
Similarly, if $u$ is of the form $u = ababbba$ then $u$ must also have come from an $a$ in a preimage of $x$.
This identifies the exact positions of all $a$s in a preimage of $x$ and every other position in $x$ must hence be filled by a $b$ (corresponding to subwords in $x$ which are of the form $a$ and such that its immediate left and right neighbours are also $a$s).
It follows that there is only one preimage of $x$ up to the least positive shift placing the origin at the intersection of two inflation words.
So $\vartheta$ is globally uniquely recognisable.
\end{example}
The next result is clear and provided without proof.
\begin{prop}\label{PROP:recog-powers}
The compatible random substitution $\vartheta$ is uniquely globally recognisable if and only if for all $m \geq 1$, the compatible random substitution $\vartheta^m$ is uniquely globally recognisable.
\end{prop}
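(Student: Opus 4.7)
The plan is to prove the contrapositive of the only-if direction by induction on $m$: if $\vartheta$ is globally uniquely recognisable, then so is $\vartheta^m$ for every $m\geq 1$. The reverse implication is immediate by specialising $m=1$, and the base case of the induction is the hypothesis itself, so the work lies in the inductive step.

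For the inductive step, assume $\vartheta^m$ is globally uniquely recognisable and fix $x\in X_\vartheta$. For existence, first apply $\vartheta$-recognisability to $x$ to obtain a unique $y\in X_\vartheta$ and $0\leq k_1\leq |\vartheta(y_0)|-1$ with $\sigma^{-k_1}(x)\in\vartheta(y)$; then apply $\vartheta^m$-recognisability to $y$ to obtain a unique $z\in X_\vartheta$ and $0\leq k_2\leq|\vartheta^m(z_0)|-1$ with $\sigma^{-k_2}(y)\in\vartheta^m(z)$. Concretely, there is $u\in\vartheta^m(z)$ with $y=\sigma^{k_2}(u)$, so a realisation of $\vartheta(y)$ can be read off from $\vartheta(u)\subseteq\vartheta^{m+1}(z)$ by shifting by the total length $L$ of the $\vartheta$-inflation words corresponding to the letters $u_0,\dots,u_{k_2-1}$. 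This gives $\sigma^{-(k_1+L)}(x)\in\vartheta^{m+1}(z)$. If the shift $k_1+L$ falls outside $[0,|\vartheta^{m+1}(z_0)|-1]$, we normalise by replacing $z$ with an appropriate single-letter shift $\sigma^{\pm 1}(z)$ and adjusting the shift by $\pm|\vartheta^{m+1}(z_0)|$ (or $\pm|\vartheta^{m+1}(z_{-1})|$); since $z\in X_\vartheta$ this is legal and yields the required $K\in[0,|\vartheta^{m+1}(z_0)|-1]$.

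For uniqueness, suppose $(z,K)$ and $(z',K')$ are both $\vartheta^{m+1}$-desubstitutions of $x$ with the correct shift bounds. Any realisation $w\in\vartheta^{m+1}(z)$ with $x=\sigma^K(w)$ factors, by the definition of $\vartheta^{m+1}=\vartheta\circ\vartheta^m$, through a word $u\in\vartheta^m(z)$ and a realisation $w\in\vartheta(u)$. This word $u$, reindexed to place the origin appropriately, gives an intermediate element $\tilde{y}\in X_\vartheta$ with $\sigma^{-k_1}(x)\in\vartheta(\tilde{y})$ for a suitable $k_1\in[0,|\vartheta(\tilde{y}_0)|-1]$, and similarly $\tilde{y}'$ arises from $(z',K')$. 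By $\vartheta$-recognisability $\tilde{y}=\tilde{y}'$ and the associated $k_1$s coincide; by the inductive hypothesis applied to $\tilde{y}$ we then get $z=z'$ and equal second-level shifts. Tracking the bookkeeping backwards shows $K=K'$.

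The main obstacle is precisely the shift bookkeeping in both the existence and uniqueness arguments: one needs a clean convention for how $\vartheta$-shifts and $\vartheta^m$-shifts compose to give $\vartheta^{m+1}$-shifts, and one must verify that the normalisation into $[0,|\vartheta^{m+1}(z_0)|-1]$ does not introduce spurious ambiguity. Using the viewpoint that a desubstitution is equivalent to a $\vartheta$-invariant partition of $\Z$ into inflation-word intervals (with the origin sitting in a distinguished interval) makes this bookkeeping essentially automatic, because compositions of such partitions are well-defined and unique at each level.
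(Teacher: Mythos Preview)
Your argument is correct. The paper itself provides no proof for this proposition, stating only that the result ``is clear and provided without proof,'' so there is nothing to compare against; your inductive factorisation of a $\vartheta^{m+1}$-desubstitution through an intermediate $\vartheta^m$-level element is the natural way to make the claim precise.

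One minor simplification: the normalisation step you flag as a potential obstacle is in fact unnecessary. Since $0\le k_2\le|\vartheta^m(z_0)|-1$, the letter $u_{k_2}=y_0$ lies inside the $\vartheta^m$-inflation word of $z_0$; hence $L=\sum_{i=0}^{k_2-1}|\vartheta(u_i)|$ together with $k_1<|\vartheta(u_{k_2})|$ gives
\[
k_1+L<\sum_{i=0}^{|\vartheta^m(z_0)|-1}|\vartheta(u_i)|=|\vartheta^{m+1}(z_0)|,
\]
so the composite shift $K=k_1+L$ automatically lands in $[0,|\vartheta^{m+1}(z_0)|-1]$. The same observation, run in reverse, cleans up the uniqueness bookkeeping: by compatibility, any $K$ in the correct range decomposes uniquely as $k_1+L$ with $k_1$ and the index $k_2$ determining $L$ in their respective ranges.
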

The following argument is essentially identical to the one for deterministic substitutions.
\begin{prop}\label{PROP:recog}
Let $\vartheta$ be a primitive compatible random substitution.
If $\vartheta$ is globally uniquely recognisable, then $X_\vartheta$ contains no periodic points.
\end{prop}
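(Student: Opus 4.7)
The argument should mirror the classical deterministic proof: use uniqueness of the preimage pair to translate a period of $x$ into a rigidity statement for its preimage, then iterate $\vartheta$ to drive the resulting length constraint above $p$. Suppose for contradiction that $x \in X_\vartheta$ is periodic with period $p \ge 1$, and let $(y, k)$ be the unique preimage pair, so that $\sigma^{-k}(x) = w$ for some realisation $w \in \vartheta(y)$. Then $x_n = w_{n+k}$ for all $n$, and in particular $(\sigma^p x)_0 = x_p$ sits at position $k + p$ in the block decomposition of $w$ into successive inflation words $\vartheta(y_0), \vartheta(y_1), \ldots$

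Applying unique recognisability now to $\sigma^p(x) = x$: the pair $(y, k)$ is still valid, since $\sigma^{-k}(\sigma^p x) = \sigma^{-k}(x) = w \in \vartheta(y)$. A second valid pair can be read off from $w$ by locating the block containing position $k + p$. Let $j \ge 0$ be the unique index with $|\vartheta(y_0 \cdots y_{j-1})| \le k + p < |\vartheta(y_0 \cdots y_j)|$ and set $\tilde k = k + p - |\vartheta(y_0 \cdots y_{j-1})|$; one verifies that $(\sigma^j y, \tilde k)$ then satisfies $\sigma^{-\tilde k}(\sigma^p x) \in \vartheta(\sigma^j y)$ and $0 \le \tilde k < |\vartheta(y_j)|$. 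Uniqueness forces $\sigma^j y = y$ and $\tilde k = k$, which rearranges to $p = |\vartheta(y_0 \cdots y_{j-1})|$. The case $j = 0$ gives $p = 0$, contradicting $p \ge 1$, so $j \ge 1$ and hence $p \ge |\vartheta(y_0)|$.

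To finish, I would replace $\vartheta$ by $\vartheta^n$. By Proposition \ref{PROP:recog-powers}, each $\vartheta^n$ is globally uniquely recognisable, and the same argument yields a preimage $y^{(n)} \in X_\vartheta$ with $p \ge |\vartheta^n(y^{(n)}_0)|$. Since $\vartheta$ is primitive and compatible, on an alphabet with $\#\mc A \ge 2$, once some iterate $\vartheta^{k_0}(a)$ contains every letter for every $a$, compatibility gives $|\vartheta^{m k_0}(a)| \ge (\#\mc A)^m$, so $|\vartheta^n(a)| \to \infty$ uniformly in $a$. Choosing $n$ so large that $|\vartheta^n(a)| > p$ for all $a \in \mc A$ then contradicts $p \ge |\vartheta^n(y^{(n)}_0)|$.

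The main obstacle I anticipate is the bookkeeping in the second paragraph: one must verify carefully that the candidate pair $(\sigma^j y, \tilde k)$ genuinely satisfies the defining constraints of a preimage pair---in particular the bound $0 \le \tilde k < |\vartheta((\sigma^j y)_0)|$ and that the correct shift of $w$ lands inside $\vartheta(\sigma^j y)$---after which the rest is an essentially mechanical imitation of the deterministic argument.
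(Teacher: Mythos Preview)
Your argument is correct and follows essentially the same route as the paper's: both use periodicity of $x$ to produce a second preimage pair by sliding along the inflation-word decomposition, and both pass to a high power $\vartheta^m$ (via Proposition~\ref{PROP:recog-powers}) so that inflation-word lengths exceed $p$. The only organisational difference is that the paper fixes the large power $m$ at the outset and then directly exhibits $(y,k)$ and $(\sigma(y),k_0)$ as distinct preimage pairs (using that $y\neq\sigma(y)$ when $\#\mc A\ge 2$), whereas you first let uniqueness force $(\sigma^j y,\tilde k)=(y,k)$ to extract the bound $p\ge|\vartheta(y_0)|$ and only then take powers; the underlying mechanism is identical.
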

\begin{proof}
Suppose that $\Per(X_\vartheta)$ is non-empty and let $x \in \Per(X_\vartheta)$ be periodic of period $p$.
Let $m \geq 1$ be large enough so that $|\vartheta^m(a)| \geq p$ for all $a \in \mc A$.
Such an $m$ exists by primitivity and compatibility of $\vartheta$.
We know that there exists an element $y \in X_\vartheta$ such that $\sigma^{-k}(x) \in \vartheta^m(y)$ where $0 \leq k \leq |\vartheta^k(y_0)| - 1$ by Proposition \ref{PROP:rs-subs}.
It follows that $\sigma^{|\vartheta^m(y_0)| - k}(x) \in \vartheta^m(\sigma(y))$.
As $x$ is $p$-periodic, then for all $i \in \Z$, we also have $\sigma^{|\vartheta^m(y_0)| - k - ip}(x) \in \vartheta^m(\sigma(y))$.

Let $i_0$ be the least integer so that $k_0 := - |\vartheta^m(y_0)| + k + i_0 p$ is non-negative.
Hence, $0 \leq - |\vartheta^m(y_0)| + k + i_0 p = k_0$.
Suppose that $k_0 \geq |\vartheta^m(y_1)|$.
Then $k_0 \geq p$ by our choice of $m$, so $k_0 - p = |\vartheta^m(y_0)| + k + (i_0-1) p \geq 0 $ but then $i_0$ is not minimal, contradicting the choice of $i_0$.
It follows that
\[
0 \leq k_0 \leq |\vartheta^m(y_1)| - 1
\]
and so we have both $\sigma^{k_0}(x) \in \vartheta^m(\sigma(y))$ with $0 \leq k_0 \leq |\vartheta^m(y_1)| - 1 = |\vartheta^m(\sigma(y)_0)| - 1$ and $\sigma^{-k}(x) \in \vartheta^m(y)$ with $0 \leq k \leq |\vartheta^m(y_0)|-1$.
It is clear that $y \neq \sigma(y)$ by primitivity, compatibility and the fact that $\#\mc A \geq 2$.
Hence, $y$ is not unique.
It follows that $\vartheta^m$ is not globally uniquely recognisable and so $\vartheta$ is not globally uniquely recognisable by Proposition \ref{PROP:recog-powers}.
\end{proof}

Let us also define a slightly weaker notion of disjoint images.
\begin{definition}
Let $\vartheta$ be a compatible random substitution.
We say that $\vartheta$ has \emph{disjoint inflation images} if for all $a \in \mc A$, $m \geq 1$ and $u, v \in \vartheta^m(a)$, then $\vartheta(u) \cap \vartheta(v) \neq \varnothing \implies u = v$.
\end{definition}
We have the following result linking the two notions.
\begin{prop}\label{PROP:recog-disj-inflation}
Let $\vartheta$ be primitive and compatible.
If $\vartheta$ is globally uniquely recognisable, then $\vartheta$ has disjoint inflation images.
\end{prop}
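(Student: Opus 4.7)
The plan is to argue by contrapositive: assume $\vartheta$ fails to have disjoint inflation images, so there exist $a \in \mc A$, $m \geq 1$, distinct $u, v \in \vartheta^m(a)$, and some $w \in \vartheta(u) \cap \vartheta(v)$. I will exhibit an element $x \in X_\vartheta$ together with two distinct preimages under $\vartheta$, thereby defeating global unique recognisability. The main structural observation is that compatibility of $\vartheta$ forces $|u| = |v|$, which is what will allow the two preimages to line up positionally.

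First I would fix some $z \in X_\vartheta$ with $z_0 = a$; such a $z$ exists because a primitive compatible random substitution has a non-empty subshift in which every letter eventually appears by primitivity. Iterating the closure-under-substitution clause of Proposition~\ref{PROP:rs-subs} gives $\vartheta^m(z) \subseteq X_\vartheta$. Pick any fixed realisation of $\vartheta^m$ on each $z_i$ with $i \neq 0$ and build two elements $y^{(1)}, y^{(2)} \in \vartheta^m(z) \subseteq X_\vartheta$ that both use this common realisation off the central block, and that use $u$ and $v$ respectively as the realisation on $z_0$, placed so that the central block occupies positions $0, 1, \ldots, |u|-1$. Because $|u| = |v|$, the blocks from $z_i$ with $i \neq 0$ occupy identical positions in both $y^{(1)}$ and $y^{(2)}$, while on the central block we read $u$ versus $v$; since $u \neq v$ we have $y^{(1)} \neq y^{(2)}$.

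Next, form a common $\vartheta$-image $x \in X_\vartheta$ by picking any fixed realisation of $\vartheta$ on each letter outside the central block (identical in the two cases), and on the central block using a factorisation $w = w^{(1)}_0 \cdots w^{(1)}_{|u|-1}$ with $w^{(1)}_i \in \vartheta(u_i)$ for the $y^{(1)}$-side and a factorisation $w = w^{(2)}_0 \cdots w^{(2)}_{|v|-1}$ with $w^{(2)}_i \in \vartheta(v_i)$ for the $y^{(2)}$-side, both of which exist since $w \in \vartheta(u) \cap \vartheta(v)$. The two resulting bi-infinite realisations agree letter-by-letter everywhere, off the central block by construction and on it because both equal $w$, so they define the same $x \in \vartheta(y^{(1)}) \cap \vartheta(y^{(2)})$. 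Taking offset $k = 0$, which trivially satisfies $0 \leq k \leq |\vartheta(y^{(j)}_0)| - 1$, we obtain $\sigma^{-k}(x) \in \vartheta(y^{(j)})$ for both $j$ with $y^{(1)} \neq y^{(2)}$, so $\vartheta$ is not globally uniquely recognisable. The only step needing care is invoking Proposition~\ref{PROP:rs-subs} to ensure $y^{(1)}, y^{(2)}$ genuinely lie in $X_\vartheta$ rather than just in the larger product space of realisations; beyond this, the argument is purely combinatorial bookkeeping and I expect no further obstacle.
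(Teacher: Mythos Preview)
Your proof is correct and follows essentially the same contrapositive strategy as the paper: pick a point in $X_\vartheta$ with $a$ at the origin, build two elements of $\vartheta^m$ of it differing only on the central block by using $u$ versus $v$, then substitute once more using $w$ on the central block to produce a single $x$ with two distinct preimages. Your write-up is in fact slightly more careful than the paper's in making explicit that compatibility gives $|u|=|v|$ (so the off-centre blocks align) and in checking the offset condition $k=0$.
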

\begin{proof}
Let us prove the contrapositive.
Suppose that $\vartheta$ does not have disjoint inflation images, so there exists a letter $a \in \mc A$, $m \geq 1$ and $u, v \in \vartheta^m(a)$ such that $\vartheta(u) \cap \vartheta(v) \neq \varnothing$.
Let $w$ be an element in the non-empty intersection $\vartheta(u) \cap \vartheta(v)$.
As $\vartheta$ is primitive, $X_\vartheta$ is either empty, in which case the result is vacuously true, or there exists an element $x \in X_\vartheta$ which contains the element $a$.
We may suppose that such an element has $a$ occurring in the position $x_0 = a$.
Call such an element $\hat{x}$ (so $\hat{x}_0 = a$) and construct new elements $x_1$ and $x_2$ in $\vartheta^m(\hat{x})$ such that every realisation of $\vartheta^m$ on the letters in $\hat{x}$ is chosen identically except for at the position $\hat{x}_0$, where for $x_1$ we choose the realisation $u \in \vartheta^m(\hat{x}_0)$ and for $x_2$ we choose the realisation $v \in \vartheta^m(\hat{x}_0)$.
So $x_1$ and $x_2$ agree on every position except that $(x_1)_{[0,|\vartheta(a)| - 1]} = u$ and $(x_1)_{[0,|\vartheta(a)| - 1]} = v$.
Note that $x_1$ and $x_2$ are legal elements in $X_\vartheta$ as $\hat{x}$ is in $X_\vartheta$ and $X_\vartheta$ is closed under taking images of $\vartheta$.

Now choose an element $x_{3} \in \vartheta(x_1) \subset X_\vartheta$ such that the realisation of $\vartheta$ on the word $(x_1)_{[0, \vartheta(a)| - 1]} = u$ is $w \in \vartheta(u)$.
From construction we see that $x_3$ is also in $\vartheta(x_2)$ because $x_1$ and $x_2$ coincide outside of the range $[0, |\vartheta(a)|-1]$ and $w$ is in both $\vartheta((x_1)_{[0, |\vartheta(a)|-1]}) = \vartheta(u)$ and $\vartheta((x_2)_{[0, |\vartheta(a)|]}) = \vartheta(v)$.
It follows that $\vartheta$ is not globally uniquely recognisable.
\end{proof}
As a consequence, we immediately see that the random Fibonacci substitution is not globally uniquely recognisable as $\vartheta(ab) \cap \vartheta(ba) = \{aba\}$ and $ab, ba \in \vartheta(a)$ so it does not have disjoint inflation images.

The converse of Proposition \ref{PROP:recog-disj-inflation} is not true. A counterexample is given by the random period doubling substitution which has periodic points in its subshift and so is not globally uniquely recognisable by Proposition \ref{PROP:recog}. However, the random period doubling substitution has disjoint images (hence disjoint inflation images) as we saw in Example \ref{EX:rpd}.

\section{Existence of Periodic Points}\label{SEC:existence-periodic}

We say that a word $u = u_1 \cdots u_p$ is a \emph{periodic block} for a periodic element $x \in \Per(X)$ if for some $i\in \mathbb{Z}$, $x_{i+j+np} = u_j$ for all $n \in \Z$ and $ 1 \leq j \leq p$.
We say that $u$ is a \emph{prime periodic block} for $x$ if $u$ is a periodic block for $x$ and $|u| = \per(x)$.
For a primitive matrix $M$, let $\boldsymbol{R} = (\boldsymbol{R}_1, \ldots, \boldsymbol{R}_d)^T$ be its right Perron--Frobenius (PF) eigenvector normalised so that $\|\boldsymbol{R}\|_1 = 1$ and, if the entries of $\boldsymbol{R}$ are rationally related, let $\hat{\boldsymbol{R}} = (\hat{\boldsymbol{R}}_1, \ldots, \hat{\boldsymbol{R}}_d)^T$ be its right PF eigenvector normalised so that all entries are positive natural numbers and minimal.

\begin{lemma}\label{PROP:periodic-block-letter-counts}

Let $x \in \Per(X_\vartheta)$ and let $u$ be a periodic block for $x$ of length $p$.
For all $a_i \in \mc A$, $|u|_{a_i} = p\|\hat{\boldsymbol{R}}\|_1^{-1}\hat{\boldsymbol{R}}_i$.
Further, $p\|\hat{\boldsymbol{R}}\|_1^{-1}$ is an integer.
\end{lemma}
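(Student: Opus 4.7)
The plan is to reformulate the lemma as a statement about letter frequencies in $x$ and then compute that frequency via iterated preimages and the Perron--Frobenius theorem. Since $x$ has periodic block $u$ of length $p$, the letter frequency of $a_i$ in $x$ is simply $|u|_{a_i}/p$, so it suffices to prove that this frequency equals $\boldsymbol{R}_i$.

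To compute it, I iterate the closure under preimages of Proposition~\ref{PROP:rs-subs}: for every $n \geq 1$ there exists $y^{(n)} \in X_\vartheta$ and an offset $k_n \geq 0$ with $\sigma^{-k_n}(x) \in \vartheta^n(y^{(n)})$. Writing $v_M := \psi(y^{(n)}_{[0,M-1]}) \in \N^d$ and $N_M := \|M_\vartheta^n v_M\|_1$, compatibility ensures that the length-$N_M$ prefix of $\sigma^{-k_n}(x)$ is the concatenation of exactly $M$ consecutive level-$n$ inflation words and so has abelianisation $M_\vartheta^n v_M$. On the other hand, $\sigma^{-k_n}(x)$ is periodic of period $p$ with periodic block the cyclic shift $\alpha^{k_n}(u)$, and cyclic shifts preserve abelianisation, so this same prefix also has abelianisation $\lfloor N_M/p \rfloor \psi(u) + r_M$ with $\|r_M\|_\infty < p$. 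Equating the two expressions and dividing by $N_M$ yields
\[
\frac{M_\vartheta^n v_M}{\|M_\vartheta^n v_M\|_1} \;=\; \frac{\psi(u)}{p} \;+\; O\!\left(\frac{p}{N_M}\right),
\]
so for each fixed $n$, letting $M \to \infty$ gives $M_\vartheta^n v_M/\|M_\vartheta^n v_M\|_1 \to \psi(u)/p$. By primitivity of $M_\vartheta$ and the Perron--Frobenius theorem, the normalised iterates $v \mapsto M_\vartheta^n v/\|M_\vartheta^n v\|_1$ converge uniformly on the unit probability simplex to the constant $\boldsymbol{R}$ as $n \to \infty$. Combining these two limits forces $\psi(u)/p = \boldsymbol{R} = \hat{\boldsymbol{R}}/\|\hat{\boldsymbol{R}}\|_1$, which is the desired identity $|u|_{a_i} = p\hat{\boldsymbol{R}}_i/\|\hat{\boldsymbol{R}}\|_1$.

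The integrality of $p/\|\hat{\boldsymbol{R}}\|_1$ follows by B\'ezout: since the entries of $\hat{\boldsymbol{R}}$ are coprime positive integers by the minimal normalisation, there exist integers $c_1,\ldots,c_d$ with $\sum_i c_i \hat{\boldsymbol{R}}_i = 1$, so that $p/\|\hat{\boldsymbol{R}}\|_1 = \sum_i c_i |u|_{a_i} \in \Z$. I expect the main technical subtlety to be the coordination of the two limits $M \to \infty$ and $n \to \infty$, which relies on the Perron--Frobenius convergence being uniform on the simplex; this in turn is standard once one observes that a sufficiently large power $M_\vartheta^k$ is strictly positive (by primitivity) and hence strictly contracts the simplex into its interior, giving a uniform contraction in the Hilbert metric.
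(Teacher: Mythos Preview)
Your proof is correct. The paper's argument is shorter because it simply invokes the standard fact (citing Queff\'elec) that for a primitive compatible random substitution the letter frequencies of every element of $X_\vartheta$ exist and equal the entries of $\boldsymbol{R}$; since $x$ is periodic with block $u$, those frequencies are trivially $|u|_{a_i}/p$, and the identity follows in one line. You instead \emph{prove} the frequency--eigenvector fact directly for the periodic point $x$, by pulling back through iterated preimages and using the uniform Perron--Frobenius contraction on the simplex. Your route is more self-contained---it relies only on the preimage closure already recorded in Proposition~\ref{PROP:rs-subs} and on elementary Perron--Frobenius, rather than on an external reference---at the cost of needing the double-limit coordination you correctly flag. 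For the integrality of $p/\|\hat{\boldsymbol{R}}\|_1$, the paper writes $p/\|\hat{\boldsymbol{R}}\|_1 = k/m$ in lowest terms and deduces $m \mid \hat{\boldsymbol{R}}_i$ for all $i$, forcing $m=1$ by minimality; your B\'ezout argument is an equivalent and slightly slicker repackaging of the same divisibility.
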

\begin{proof}
The word $u$ is a periodic block for $x$ and so without loss of generality, we may assume that $x = \cdots uu.uu \cdots$ for some $i \geq 0$.
Note that the proportion (with respect to the total length) of the letter $a_i$ in the word $x_{[-jp,jp]} = u^{2jp}$ for $j \geq 1$ is constant.
That is,
\[
\frac{|x_{[-jp,jp]}|_{a_i}}{|x_{[-jp,jp]}|_{\hphantom{a_i}}} = \frac{|u^{2j}|_{a_i}}{2jp} = \frac{2j|u|_{a_i}}{2jp} = \frac{|u|_{a_i}}{p}.
\]
As $x$ is periodic, its letter frequencies are well-defined (in fact this is true for every element in $X_\vartheta$ as $\vartheta$ is compatible).
It follows that the frequency of the letter $a_i$ in $x$ is given by
\[
\freq_{a_i}(x) = \lim_{j \to \infty}\frac{|x_{[-jp,jp]}|_{a_i}}{|x_{[-jp,jp]}|_{\hphantom{a_i}}} = \frac{|u|_{a_i}}{p}.
\]

As in the case of deterministic substitutions, because we are only concerned with abelianisations, letter frequencies of all points in $X_\vartheta$ are equal to the corresponding entries of the normalised right PF eigenvector $\boldsymbol{R} = (\boldsymbol{R}_1, \ldots, \boldsymbol{R}_d)$ of $M$ \cite[Sec 5.4]{Q:book} given by $\boldsymbol{R}_i = \hat{\boldsymbol{R}}_i/\|\hat{\boldsymbol{R}}\|_1$.

So we have
\[
\frac{|u|_{a_i}}{p} = \boldsymbol{R}_i = \frac{\hat{\boldsymbol{R}}_i}{\|\hat{\boldsymbol{R}}\|_1} \implies |u|_{a_i} = \frac{p\hat{\boldsymbol{R}}_i}{\|\hat{\boldsymbol{R}}\|_1}.
\]
It remains to show that $p/\|\hat{\boldsymbol{R}}\|_1$ is an integer.
As $\|\hat{\boldsymbol{R}}\|_1$ is a positive integer, write $p/\|\hat{\boldsymbol{R}}\|_1:= k/m$ in lowest terms.
So
\[
|u|_{a_i} = \frac{k\hat{\boldsymbol{R}}_i}{m}
\]
and as the left hand side is an integer, so is the right.
It follows that $m$ divides $k\hat{\boldsymbol{R}}_i$, but $k/m$ is in lowest terms and so $m\mid \hat{\boldsymbol{R}}_i$ for all $1 \leq i \leq d$.
As the normalisation $\hat{\boldsymbol{R}}$ was chosen so that all entries are positive natural numbers and minimal, it follows that $m=1$ and hence $p/\|\hat{\boldsymbol{R}}\|_1 = k$ is an integer.
\end{proof}
We let $\mathfrak{p} := \|\hat{\boldsymbol{R}}\|_1$ denote the \emph{virtual period} of $\vartheta$; so-called because of the following immediate consequence of Lemma \ref{LEM:preimages}.
\begin{coro}\label{COR:divisible}
If $x \in \Per(X_\vartheta)$, then $\mathfrak{p}$ divides $\per(x)$.
\end{coro}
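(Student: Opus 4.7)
The plan is to extract this as an immediate corollary of Lemma \ref{PROP:periodic-block-letter-counts}. Given $x \in \Per(X_\vartheta)$, I would set $p := \per(x)$ and consider the length-$p$ word $u := x_0 x_1 \cdots x_{p-1}$. By the very definition of the prime period, $x_{j + np} = x_j = u_j$ for every $n \in \Z$ and every $0 \leq j \leq p-1$, so $u$ qualifies as a periodic block for $x$ of length $p$ in the sense defined at the beginning of Section \ref{SEC:existence-periodic}. Feeding the pair $(x,u)$ into the preceding lemma yields that $p\|\hat{\boldsymbol{R}}\|_1^{-1} = p/\mathfrak{p}$ is an integer, and this is precisely the statement $\mathfrak{p} \mid \per(x)$.

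There is essentially no obstacle here. The substantive work — bounding $|u|_{a_i}$ by $p \hat{\boldsymbol{R}}_i / \|\hat{\boldsymbol{R}}\|_1$ via the frequency identity $\freq_{a_i}(x) = \hat{\boldsymbol{R}}_i / \|\hat{\boldsymbol{R}}\|_1$, and then concluding integrality of $p/\mathfrak{p}$ from the minimality built into the definition of $\hat{\boldsymbol{R}}$ — has already been carried out inside the proof of Lemma \ref{PROP:periodic-block-letter-counts}. The only content specific to this corollary is the trivial observation that the prime period is realised as the length of some periodic block (namely the initial segment of $x$ of length $\per(x)$), which is immediate from the definitions. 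No appeal to disjoint images, to primitivity beyond what is needed for $\hat{\boldsymbol{R}}$ to exist, or to the preimage closure properties of $X_\vartheta$ is required at this stage, which is why the paper flags this consequence as \emph{immediate}.
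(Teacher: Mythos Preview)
Your argument is correct and matches the paper's intended approach: the corollary is just the ``Further, $p\|\hat{\boldsymbol{R}}\|_1^{-1}$ is an integer'' clause of Lemma~\ref{PROP:periodic-block-letter-counts} applied with $p=\per(x)$. (The paper's text cites Lemma~\ref{LEM:preimages} here, but that is evidently a mislabelled cross-reference; the content clearly comes from the immediately preceding Lemma~\ref{PROP:periodic-block-letter-counts}, exactly as you invoke it.)
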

Note that $\mathfrak{p}$ is the virtual period of $\vartheta$ if and only if $\mathfrak{p}$ is the virtual period of $\vartheta^k$ for all $k \geq 1$.

\begin{prop}\label{PROP:rational}
Let $\vartheta$ be a primitive compatible random substitution with $\Per(X_\vartheta) \neq \varnothing$.
Then we have $\boldsymbol{R}_i \in \Q$ for all $1 \leq i \leq d$.
Further, $\lambda$ is a positive integer.
\end{prop}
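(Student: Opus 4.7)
The plan is to extract both conclusions directly from the letter-count formula already established in Lemma \ref{PROP:periodic-block-letter-counts}, combined with the fact that $M_\vartheta$ has non-negative integer entries.

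First, I would fix any $x \in \Per(X_\vartheta)$ and a periodic block $u$ of length $p$. The frequency computation in the proof of Lemma \ref{PROP:periodic-block-letter-counts} showed that $\freq_{a_i}(x) = |u|_{a_i}/p$, and the standard identification of letter frequencies with entries of the normalised right PF eigenvector (valid for compatible primitive substitutions) yielded $|u|_{a_i}/p = \boldsymbol{R}_i$. Since $|u|_{a_i}$ and $p$ are both positive integers, this at once forces $\boldsymbol{R}_i \in \Q$ for every $1 \leq i \leq d$, proving the first assertion. In particular, the entries of $\boldsymbol{R}$ are rationally related, so the integer-normalised vector $\hat{\boldsymbol{R}}$ is now well defined with $\gcd(\hat{\boldsymbol{R}}_1, \ldots, \hat{\boldsymbol{R}}_d) = 1$ by the minimality clause of its definition.

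For the second assertion, I would exploit the eigenvalue equation $M_\vartheta \hat{\boldsymbol{R}} = \lambda \hat{\boldsymbol{R}}$. Because $M_\vartheta$ has integer entries and $\hat{\boldsymbol{R}}$ has integer entries, the left-hand side lies in $\Z^d$, and hence $\lambda \hat{\boldsymbol{R}}_i \in \Z$ for each $i$. Writing $\lambda = k/m$ in lowest terms, $m$ divides $k \hat{\boldsymbol{R}}_i$ for all $i$; since $\gcd(k,m) = 1$, this forces $m \mid \hat{\boldsymbol{R}}_i$ for every $i$. By the minimality of $\hat{\boldsymbol{R}}$ we have $\gcd(\hat{\boldsymbol{R}}_1, \ldots, \hat{\boldsymbol{R}}_d) = 1$, so $m = 1$ and $\lambda \in \Z$. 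Positivity of $\lambda$ is automatic from Perron--Frobenius theory applied to the primitive non-negative integer matrix $M_\vartheta$.

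I do not expect a serious obstacle in this argument; the only subtlety is the step of deducing that the gcd-1 normalisation $\hat{\boldsymbol{R}}$ exists, which relies exactly on the rationality conclusion from the first half. In particular, the whole proposition is, in essence, just bookkeeping: the integrality of the number of letter occurrences inside a periodic block, together with the integrality of the substitution matrix, pins down both $\boldsymbol{R}$ and $\lambda$ to be rational and in fact integral.
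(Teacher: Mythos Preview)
Your proposal is correct and follows essentially the same route as the paper: deduce $\boldsymbol{R}_i = |u|_{a_i}/p \in \Q$ from a periodic block $u$, then use the eigenvalue equation $M_\vartheta \hat{\boldsymbol{R}} = \lambda \hat{\boldsymbol{R}}$ with integer data on both sides, write $\lambda$ as a reduced fraction, and invoke the minimality of $\hat{\boldsymbol{R}}$ to force the denominator to be $1$. Your presentation is in fact slightly more explicit about why $\hat{\boldsymbol{R}}$ exists and has $\gcd$ equal to $1$, but the argument is otherwise identical.
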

\begin{proof}
Let $1 \leq i \leq d$.
Suppose that $\Per (X_\vartheta) \neq \varnothing$ and let $x \in \Per_p(X_\vartheta)$ be a periodic point with periodic block $u = (u_1 \cdots u_p)$.
As before, by compatibility of $\vartheta$, it is well known that letter frequencies of all points in $X_\vartheta$ are uniformly well-defined and equal to the corresponding entries of the normalised right PF eigenvector of $M$.
As $x$ is periodic, we know that for all $i$, the frequency of the letter $a_i$ in $x$ is exactly $|u|_{a_i}/p$.
It follows that $\boldsymbol{R}_i = |u|_{a_i}/p$ and, as $p$ is a non-zero integer, it follows that $\boldsymbol{R}_i \in \Q$.

Considering both sides of the equation $M\hat{\boldsymbol{R}} = \lambda \hat{\boldsymbol{R}}$.
All entries of the left hand side are non-trivial, non-negative, linear combinations of positive integers and so for all $1 \leq i \leq d$, $(M\hat{\boldsymbol{R}})_i$ is a positive integer.
Suppose $\lambda \in \Q \setminus \N$.
So $\lambda = p/q$ for $\gcd(p,q) = 1$ and $q \geq 2$.
It follows that $\frac{p}{q} (\hat{\boldsymbol{R}})_i$ is an integer for every $i$ and so $p (\hat{\boldsymbol{R}})_i$ is divisible by $q$ for every $i$.
As $p$ and $q$ are coprime, it follows that $(\hat{\boldsymbol{R}})_i$ is divisible by $q \geq 2$ for every $i$ which contradicts the construction of $\hat{\boldsymbol{R}}$ being the minimal positive integer normalisation of $\boldsymbol{R}$.
It follows that $\lambda$ is a positive integer.
\end{proof}

This result gives a necessary criterion for the existence of periodic points in the RS-subshift: If $\lambda$ is not an integer, then $\Per(X_\vartheta) = \varnothing$.

\begin{example}
Let $\vartheta \colon a \mapsto \{ab, ba\}, b \mapsto \{a\}$ be the random Fibonacci substitution which is compatible.
The associated substitution matrix is given by $M =
\left[\begin{smallmatrix}
1 & 1 \\
1 & 0 \\
\end{smallmatrix}\right]
$ whose PF eigenvalue is $\lambda = \frac{1 + \sqrt{5}}{2}$, the golden ratio.
As $\lambda$ is not an integer, it follows from Proposition~\ref{PROP:rational} that $\Per(X_\vartheta) = \varnothing$.
\end{example}

The converse is certainly false, as evidenced by any aperiodic deterministic substitution with integer expansion factor.

\begin{lemma}\label{LEM:preimages}
If $x$ is a periodic element in $X_\vartheta$ with prime period $p$, then there exists a prime periodic block $u$ of $x$ and a natural number $n$ such that $u^n$ is a concatenation of exact inflation words $u^n \in \vartheta(a_1) \cdots \vartheta(a_k)$ with $a_1 \cdots a_k \in \mc L_\vartheta$ and $k \leq p$.
\end{lemma}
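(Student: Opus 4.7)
My plan is to combine the preimage property of $X_\vartheta$ from Proposition \ref{PROP:rs-subs} with a pigeonhole argument on the breakpoint positions of an inflation-word decomposition, taken modulo $p$. The idea is that a preimage of a suitable shift of $x$ supplies an infinite sequence of breakpoints; by periodicity, some short run of breakpoints must bracket a segment whose length is a positive multiple of $p$, and this segment will automatically be a power of a prime periodic block.

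To carry this out, I would first invoke Proposition \ref{PROP:rs-subs} to obtain $y \in X_\vartheta$ and an integer $0 \leq k' \leq \max\{|w| : w \in \vartheta(y_0)\} - 1$ with $z := \sigma^{-k'}(x) \in \vartheta(y)$. The shifted sequence $z$ remains $p$-periodic with prime period $p$, since $\sigma^q z = z$ with $0 < q < p$ would force $\sigma^q x = x$. The containment $z \in \vartheta(y)$ yields an explicit decomposition of $z$ into exact inflation words with breakpoints at the positions $L_m := \sum_{i=0}^{m-1} |\vartheta(y_i)|$ for $m \geq 0$, and $z_{[L_m, L_{m+1}-1]} = \vartheta(y_m)$. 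Looking at the $p+1$ integers $L_0, L_1, \ldots, L_p$ modulo $p$, the pigeonhole principle gives indices $0 \leq i < j \leq p$ with $L_i \equiv L_j \pmod{p}$, so that $L_j - L_i = np$ for some integer $n \geq 1$ (positivity holding because every inflation word is non-empty). Setting $u := z_{[L_i, L_i + p - 1]}$ produces a length-$p$ block which is a prime periodic block of $z$ by $p$-periodicity, and hence also a prime periodic block of $x$ since $z$ is just a shift of $x$. Combining the two descriptions of $z_{[L_i, L_j - 1]}$ now gives $u^n = \vartheta(y_i)\vartheta(y_{i+1})\cdots\vartheta(y_{j-1})$, so on setting $a_s := y_{i + s - 1}$ and $k := j - i$ we obtain $u^n \in \vartheta(a_1) \cdots \vartheta(a_k)$ with $k \leq p$, and the word $a_1 \cdots a_k$ is a subword of $y \in X_\vartheta$ and therefore lies in $\mc L_\vartheta$.

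I do not anticipate any serious obstacle, as the entire argument is essentially a pigeonhole estimate on a combinatorially constrained structure. The two small points of care are (i) verifying that the extracted $u$ is \emph{prime} periodic for $x$ rather than merely periodic with some smaller true period, which follows from $\per(z) = \per(x) = p$ together with $u$ having length exactly $p$, and (ii) confirming the strict positivity $n \geq 1$, which is a direct consequence of the non-emptiness of the inflation images of individual letters guaranteed by the definition of a random substitution.
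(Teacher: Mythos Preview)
Your argument is correct and is essentially the same as the paper's: both obtain a preimage $y$ via Proposition~\ref{PROP:rs-subs}, apply the pigeonhole principle to the $p+1$ inflation-word breakpoints taken modulo $p$, and extract the resulting segment as $u^n$ with $k=j-i\leq p$ and $a_1\cdots a_k$ a legal subword of $y$. The only cosmetic difference is that the paper phrases the pigeonhole step as ``position of the breakpoint within a copy of a fixed prime periodic block $\tilde{u}$'' and then cyclically permutes $\tilde{u}$, whereas you compute residues $L_m\bmod p$ directly; these are the same argument.
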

\begin{proof}
First, as $x \in X_\vartheta$ then there exists an element $y \in X_\vartheta$ such that some shift of $x$ is in $\vartheta(y)$.
In particular, without loss of generality, we may assume that there exists a decomposition of $x$ into exact inflation words
\[
x = \cdots \vartheta(y_{-1}). \vartheta(y_0) \vartheta(y_1) \cdots.
\]
Let $\tilde{u}$ be some prime periodic block of $x$.
The left-most endpoint of the inflation word $\vartheta(y_0)$ coincides with some position $(\tilde{u})_i$ within a copy of $\tilde{u}$ where $0 \leq i \leq p-1$.
Similarly, $\vartheta(y_j)$ coincides with one of only finitely many possible positions in a copy of $\tilde{u}$.
In particular, for the consecutive occurrence of inflation words $\vartheta(y_0) \cdots \vartheta(y_{p})$, at least one position with the same index in two separate copies of $\tilde{u}$ coincide with left endpoints of a corresponding inflation word by an application of the pigeon-hole principle.
Let $\vartheta(y_{j})$ and $\vartheta(y_{k})$ with $j < k$ be such an occurrence and suppose their left endpoints coincide with a copy of $\tilde{u}$ at position $0 \leq i \leq p$.
Let $u = \alpha^i(\tilde{u})$ be the $i$-fold cyclic permutation of $\tilde{u}$.
Then the left endpoints of $\vartheta(y_j)$ and $\vartheta(y_k)$ coincide with the left endpoints of different copies of $u$ and hence also the right endpoint of $\vartheta(y_{k-1})$ coincides with the right endpoint of a copy of $u$ which is never to the left (it may be the same as) the right endpoint of the copy of $u$ whose left endpoint coincides with the left endpoint of $\vartheta(y_j)$.

It follows that $\vartheta(y_j)\cdots \vartheta(y_{k-1})$ is exactly $u^n$ for some $n$.
Note that $y \in X_\vartheta$ and so the subword $y_j \cdots y_{k-1}$ is legal, and as both $j$ and $k-1$ are bounded between $0$ and $p-1$, we also have $|y_j y_{j+1}\cdots y_{k-1}| \leq p$ as required.
\end{proof}

Lemma \ref{LEM:preimages} allows us to show the existence of periodic preimages of periodic words if we assume $\vartheta$ has disjoint images.

\begin{prop}\label{PROP:periodic-preimages}
Let $\vartheta$ be a random substitution with disjoint images and let $x \in \operatorname{Per}(X_\vartheta)$.
If the prime period of $x$ is $p$ then for some $i \in \Z$ there exists a point $y \in \Per(X_\vartheta)$ with $\sigma^i(x) \in \vartheta(y)$ and $\per(y) \leq p$.
\end{prop}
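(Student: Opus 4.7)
The plan is to apply Lemma~\ref{LEM:preimages} to extract a short building block, take its periodic extension as the candidate preimage $y$, and use disjoint images to verify $y \in X_\vartheta$.

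Applying Lemma~\ref{LEM:preimages} to $x$ yields a prime periodic block $u$ of $x$, an integer $n \geq 1$, a legal word $a_1 \cdots a_k \in \mc L_\vartheta$ with $k \leq p$, and realisations $w_j \in \vartheta(a_j)$ such that $u^n = w_1 w_2 \cdots w_k$. I define $y \in \mc A^\Z$ to be the bi-infinite $k$-periodic extension of the block $a_1 \cdots a_k$, so that $\per(y)$ divides $k$ and hence $\per(y) \leq p$. Selecting the $k$-periodic realisation $\vartheta(y_\ell) := w_{(\ell \bmod k)+1}$ for each $\ell \in \Z$ produces the bi-infinite word $\ldots u^n u^n u^n \ldots \in \vartheta(y)$, which equals $\sigma^i(x)$ for some $i \in \Z$ because $u$ is a prime periodic block of $x$. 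This establishes $\sigma^i(x) \in \vartheta(y)$.

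The main obstacle is showing $y \in X_\vartheta$. By Proposition~\ref{PROP:rs-subs}, there exists $z \in X_\vartheta$ and $0 \leq k' \leq \max\{|v| : v \in \vartheta(z_0)\} - 1$ with $\sigma^{i-k'}(x) \in \vartheta(z)$. I plan to use disjoint images to argue that $y$ coincides with a shift of $z$, so that $y \in X_\vartheta$ follows from shift-invariance. The key uniqueness principle is this: under disjoint images, whenever two inflation-word decompositions of a bi-infinite sequence share a common boundary position, disjoint images applied to the finite aligned inflation-word chunk between consecutive common boundaries forces the two decompositions to agree letter-by-letter on that chunk.

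The technical heart of the argument is locating enough common boundaries between the $y$-decomposition and the $z$-decomposition of $\sigma^i(x)$. Since $y$ has period $k$ with $\sum_{j=1}^k |w_j| = np$, the set of $y$-boundary positions is periodic in $\Z$ modulo $np$, with a fixed $k$-element set of residues. Pigeonholing $z$'s boundary positions modulo $np$, together with the $p$-periodicity of $\sigma^i(x)$, produces infinitely many common boundaries between the two decompositions; disjoint images then forces $z$ to inherit the period-$k$ structure of $y$ up to a shift. This yields $y \in X_\vartheta$, and since $\per(y) \leq p$ we conclude $y \in \Per(X_\vartheta)$ as required.
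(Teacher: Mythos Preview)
Your approach is essentially dual to the paper's: you define $y$ as the periodic extension of $a_1\cdots a_k$ and then try to prove $y\in X_\vartheta$ by matching it against a legal preimage $z$, whereas the paper starts from a legal preimage (the one constructed inside the proof of Lemma~\ref{LEM:preimages}) and uses disjoint images to show that \emph{it} is periodic. The underlying idea is the same, but your execution has a genuine gap in the ``technical heart'' paragraph.

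The problem is the pigeonhole step. Pigeonholing the $z$-boundary positions modulo $np$ only tells you that some residue class is hit by infinitely many $z$-boundaries; it does \emph{not} force any of those residues to lie in the fixed $k$-element set of $y$-boundary residues. Without at least one common boundary you cannot invoke disjoint images to compare the two decompositions: the definition of disjoint images requires both preimage words to be legal, and the words $(a_1\cdots a_k)^m$ coming from your periodic $y$ are not yet known to be legal---establishing their legality is precisely what you are trying to prove. So the argument as written is circular at this point.

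The repair is immediate and collapses your proof into the paper's. Instead of choosing $z$ via Proposition~\ref{PROP:rs-subs}, take $z$ to be the preimage produced \emph{inside the proof} of Lemma~\ref{LEM:preimages}: there the block $a_1\cdots a_k$ is exhibited as an actual subword $z_{[s,t]}$ mapping exactly to $u^n = x_{[j,j+np-1]}$. This gives you common boundaries at $j$ and $j+np$ for free, with no pigeonhole needed. Now $u^n\in\vartheta(z_{[s,t]})\cap\vartheta(a_1\cdots a_k)$ with both preimages legal, so disjoint images gives $z_{[s,t]}=a_1\cdots a_k$; since $j+np$ is then again a $z$-boundary and $x_{[j+np,j+2np-1]}=u^n$, the same comparison propagates block by block and forces $z$ to be (a shift of) $(a_1\cdots a_k)^\infty = y$. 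That is exactly the paper's proof, just read from the other end.
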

\begin{proof}
Let $y$ be a preimage of some shift of $x$ under $\vartheta$.
By Lemma \ref{LEM:preimages}, there exists a prime periodic block $u = x_{[j,j+p-1]}$ of $x$ for some $j \in \Z$ and a natural number $n$ such that $u^n$ is an exact concatenation of inflation words $u^n \in \vartheta(a_1) \cdots \vartheta(a_k)$ and $k \leq p$.
Note also that in the proof of Lemma \ref{LEM:preimages}, the point $y$ included this word $a_1 \cdots a_k = y_{[s,t]}$ as a subword which exactly mapped to $u^n$ under $\vartheta$.
As $\vartheta$ has disjoint images and $u^n$ is a concatenation of exact inflation words, the legal preimage of the word $x_{[j,j+np-1]}$ in $y$ is uniquely determined and must be $a_1 \cdots a_k$.
It follows that every subword of $x$ of the form $x_{[j+knp,j+(k+1)np-1]}$ for some $k \in \Z$ also has a uniquely determined preimage in $y$ and it must also be the word $a_1 \cdots a_k$.
As these words fully cover $x$, it follows that $y$ is (a shift of) the bi-infinite sequence $\cdots (a_1\cdots a_k).(a_1 \cdots a_k) \cdots$ whose prime period is at most $k \leq p$.
\end{proof}
\begin{remark}
Note that it is possible for the prime period of $y$ to be equal to $p$ as the periods of periodic preimages cannot decrease indefinitely.
Hence the bound on the prime period of $y$ is sharp.
Concretely, we have the example of the word $(aab)^\infty$ for the random period doubling substitution whose only preimages are shifted copies of itself.

\end{remark}

We do not know how to remove the disjoint images assumption from the statement of Proposition \ref{PROP:periodic-preimages}, but it is possible that the assumption is unnecessary.
It is not at all clear why a periodic element in the RS-subshift could not otherwise have only non-periodic preimages.
It is certainly true that non-periodic preimages can exist for periodic elements in $X_\vartheta$.

For completeness, we should also discuss the existence of periodic \emph{images}, though these are much simpler to identify.
\begin{prop}
If $x \in \Per_p(X_\vartheta)$ then there exists $y \in \vartheta(x)$ such that $y \in \Per_{\lambda p}(X_\vartheta)$, where $\lambda$ is the PF eigenvalue of $M$.
\end{prop}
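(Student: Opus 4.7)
The plan is to construct $y$ as the most uniform possible realisation of $\vartheta$ applied to $x$: we replace every periodic block of $x$ by the same chosen inflation word. Since $x \in \Per_p(X_\vartheta)$, after relabelling the indices we may assume $x = \cdots uuu \cdots$ with $u = x_{[0,p-1]}$ a periodic block of length $p$. I would fix any realisation $w \in \vartheta(u)$ and define $y$ to be the bi-infinite concatenation $y = \cdots w.ww \cdots$, obtained by making, for each occurrence of the block $u$ in $x$, the same sequence of letter-by-letter choices that produces $w$. Since the random substitution acts on letters independently, this is a valid element of $\vartheta(x)$, and since $x \in X_\vartheta$ and $X_\vartheta$ is closed under substitution by Proposition \ref{PROP:rs-subs}, we have $y \in X_\vartheta$. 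By construction, $y$ is periodic with period $|w|$, so the whole argument reduces to verifying $|w| = \lambda p$.

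For this, I would combine compatibility with the frequency identity already extracted in the proof of Lemma \ref{PROP:periodic-block-letter-counts}, namely $|u|_{a_i} = p\boldsymbol{R}_i$ for each letter $a_i$, where $\boldsymbol{R}$ is the right Perron--Frobenius eigenvector of $M$ normalised so that $\|\boldsymbol{R}\|_1 = 1$. Letting $\boldsymbol{L} = (|\vartheta(a_1)|, \ldots, |\vartheta(a_d)|)^T$ denote the (well-defined, by compatibility) length vector, we obtain
\[
|w| \;=\; |\vartheta(u)| \;=\; \sum_{i=1}^{d} |u|_{a_i}\,|\vartheta(a_i)| \;=\; p\,\boldsymbol{L}^T \boldsymbol{R}.
\]
Now observe that $\boldsymbol{L} = M^T \boldsymbol{1}$, where $\boldsymbol{1}$ is the all-ones column vector, since the $j$-th coordinate of $M^T \boldsymbol{1}$ is $\sum_i m_{ij} = |\vartheta(a_j)|$. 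Therefore
\[
\boldsymbol{L}^T \boldsymbol{R} \;=\; \boldsymbol{1}^T M \boldsymbol{R} \;=\; \lambda\, \boldsymbol{1}^T \boldsymbol{R} \;=\; \lambda,
\]
using $M\boldsymbol{R} = \lambda \boldsymbol{R}$ and $\|\boldsymbol{R}\|_1 = 1$. Thus $|w| = \lambda p$, and hence $y \in \Per_{\lambda p}(X_\vartheta)$.

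There is essentially no hard step: the heart of the argument is the observation that choosing realisations of $\vartheta$ consistently across identical periodic blocks preserves periodicity, and the only computation needed is the identification of the length of an inflated periodic block with $\lambda p$, which is immediate from the Perron--Frobenius identity. The only point requiring care is that the uniform choice of realisations across different copies of $u$ is genuinely allowed, which is simply the statement that the random substitution acts on each letter independently.
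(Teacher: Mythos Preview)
Your proof is correct and follows essentially the same approach as the paper: choose a single realisation $w \in \vartheta(u)$ and apply it uniformly to every copy of the periodic block $u$, then compute $|w| = \lambda p$ via the Perron--Frobenius relation. The only cosmetic difference is in the length computation---the paper tracks individual letter counts $|v|_{a_i} = k\lambda\hat{\boldsymbol{R}}_i$ and sums, while you package the same identity as $\boldsymbol{L}^T\boldsymbol{R} = \boldsymbol{1}^T M\boldsymbol{R} = \lambda$---but this is the same eigenvector calculation.
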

\begin{proof}
Recall that $\hat{\boldsymbol{R}}$ is the right PF eigenvector of $M$ normalised so that all entries are positive integers and minimal.
Let $u$ be a periodic block for $x$ of length $p$, so $x = \cdots uu.uu \cdots$.
By Lemma \ref{PROP:periodic-block-letter-counts}, there exists $k \geq 1$ such that for all $a_i \in \mc A$, $|u|_{a_i} = k \hat{\boldsymbol{R}}_i$.
Let $v \in \vartheta (u)$.
As $\vartheta$ is compatible,
\[
|v|_{a_i} = |\vartheta(u)|_{a_i} = (M(k \hat{\boldsymbol{R}}_1, \ldots, k \hat{\boldsymbol{R}}_d)^T)_i = k (M \hat{\boldsymbol{R}})_i = k \lambda \hat{\boldsymbol{R}}_i.
\]
It follows that $|v| = \sum_{i=1}^d |v|_{a_i} = \sum_{i=1}^d k \lambda \hat{\boldsymbol{R}}_i = \lambda \sum_{i=1}^d |u|_{a_i} = \lambda |u| = \lambda p$.
Hence, by taking the realisation of $\vartheta$ on $x$ which maps every periodic block $u$ to $v$, the resulting sequence $y \in \vartheta(x)$ has periodic block $v$ of length $\lambda p$ and so $y \in \Per_{\lambda p}(X_\vartheta)$.
\end{proof}

Proposition \ref{PROP:rational} is only a necessary criterion for the existence of periodic points.
We would prefer to have stronger criteria which can be applied when $\lambda$ is an integer, such as when $\vartheta$ is of constant length.

\begin{lemma}\label{LEM:main}
Let $\vartheta$ be a primitive, compatible random substitution of constant length $\ell$.
Let $v$ be a legal word of length $|v| = \ell$ such that $v \notin \vartheta(\mc A)$.
If $\gcd(p,\ell)=1$, then no element in $\Per_p(X_\vartheta)$ can contain $v$ as a subword.
\end{lemma}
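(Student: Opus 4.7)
The plan is to argue by contradiction, using the fact that a constant-length decomposition into inflation words pins down a single residue class modulo $\ell$ at which inflation-word-aligned length-$\ell$ windows sit, and that a period coprime to $\ell$ forces any occurrence of $v$ to translate into every residue class.

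First, I would assume for contradiction that there exists $x \in \Per_p(X_\vartheta)$ with $\gcd(p,\ell) = 1$ such that $v \triangleleft x$, say $x_{[i,i+\ell-1]} = v$ for some $i \in \Z$. By Proposition \ref{PROP:rs-subs}, $X_\vartheta$ is closed under taking preimages, so there exist $y \in X_\vartheta$ and an integer $0 \leq s \leq \ell - 1$ (using that $\vartheta$ has constant length $\ell$) with $\sigma^{-s}(x) \in \vartheta(y)$. Fixing a realisation witnessing this inclusion gives a decomposition of $x$ into exact inflation words whose left endpoints occupy the positions $\{s + k\ell : k \in \Z\}$; that is, the inflation-word boundaries lie in a single residue class $r := s \pmod{\ell}$.

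Next, I would exploit the periodicity. Since $x$ has period $p$, we have $x_{[i + kp, i + kp + \ell - 1]} = v$ for every $k \in \Z$. Because $\gcd(p, \ell) = 1$, the set $\{i + kp \pmod{\ell} : k \in \Z\}$ equals $\{0, 1, \ldots, \ell - 1\}$, so in particular there exists $k_0 \in \Z$ with $i + k_0 p \equiv r \pmod \ell$. At this position the window $x_{[i + k_0 p, i + k_0 p + \ell - 1]}$ is exactly aligned with one of the inflation words in the fixed decomposition, namely it equals $\vartheta(y_j)$ for the appropriate index $j$ determined by $i + k_0 p = s + j \ell$. Hence $v = \vartheta(y_j) \in \vartheta(\mc A)$, which contradicts the hypothesis $v \notin \vartheta(\mc A)$.

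The argument is short because the two ingredients—existence of a constant-length inflation decomposition via Proposition \ref{PROP:rs-subs} and the residue-covering consequence of $\gcd(p,\ell) = 1$—fit together cleanly. The only subtlety to double-check is that the preimage statement from Proposition \ref{PROP:rs-subs} really does yield a boundary shift $s$ bounded by $\ell - 1$ (which it does, since all inflation words have length exactly $\ell$); apart from that, there is no serious obstacle, as the whole proof reduces to a pigeonhole on the residues of $i + kp$ modulo $\ell$.
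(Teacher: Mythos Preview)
Your proof is correct and follows essentially the same approach as the paper: both fix a constant-length inflation decomposition of $x$, use $p$-periodicity to translate the occurrence of $v$, and use $\gcd(p,\ell)=1$ to land $v$ exactly on an inflation-word window, yielding $v\in\vartheta(\mc A)$. The only cosmetic difference is that the paper normalises so the inflation boundaries sit at multiples of $\ell$ and invokes B\'ezout's identity explicitly to compute the required translate, whereas you phrase the same step as the residues $i+kp\pmod\ell$ exhausting $\Z/\ell\Z$.
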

\begin{proof}
Let $x\in \Per_p(X_\vartheta)$ and let $v$ be a subword of $x$ of length $|v| = \ell$.
Without loss of generality, suppose that $x \in \vartheta(y)$ for some $y \in X_\vartheta$, so that each word of the form $x_{[i\ell, (i+1)\ell -1]}$ is an exact inflation word.
Suppose that $x_{[j,j+\ell-1]} = v$ is the first appearance of $v$ in $x$ immediately to the right of the origin.
By the $p$-periodicity of $x$, $v$ also appears at $x_{[j+np, j+np+\ell-1]}$ for every $n \in \Z$.
By B\'{e}zout's identity, let $s, t \in \Z$ be such that $sp + t \ell = 1$.
At $n = -js$ then, we have
\[
j+np = j - jsp = j - j(1 - t \ell) = jt\ell
\]
and so $x_{[j t \ell, (jt + 1) \ell - 1]} = v$.
However, we know that every word of the form $x_{[i\ell, (i+1)\ell -1]}$ for $i \in \Z$ is an exact inflation word, and so $v \in \vartheta(\mc A)$.
It follows that any $x \in \Per_p(X_\vartheta)$ can only contain subwords of length $\ell$ which are inflation words.
\end{proof}

\begin{theorem}\label{THM:empty-periodic}
Let $\vartheta$ be a primitive, compatible random substitution of constant length $\ell$ with disjoint images.
If there exists a legal word $v$ of length $|v| = \ell$ such that $v \notin \vartheta(\mc A)$ and which appears in every element of $X_\vartheta$, then $\Per(X_{\vartheta}) = \varnothing$.
\end{theorem}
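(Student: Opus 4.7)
The plan is to argue by contradiction using a minimality argument on prime periods. Suppose $\Per(X_\vartheta) \neq \varnothing$, and let
\[
p_{\min} := \min\{\per(x) : x \in \Per(X_\vartheta)\}.
\]
Pick $x \in \Per(X_\vartheta)$ with $\per(x) = p_{\min}$. By hypothesis, the legal word $v$ appears as a subword of $x$. The argument then splits based on $\gcd(p_{\min},\ell)$.

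In the case $\gcd(p_{\min},\ell) = 1$, Lemma~\ref{LEM:main} directly applies: since $|v| = \ell$ and $v \notin \vartheta(\mc A)$, no element of $\Per_{p_{\min}}(X_\vartheta)$ can contain $v$ as a subword, contradicting its presence in $x$. This is the easy case.

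In the case $\gcd(p_{\min},\ell) = d > 1$, set $p' := p_{\min}/d$, and note $p' < p_{\min}$ and $p_{\min} \mid \ell p'$. Applying Proposition~\ref{PROP:periodic-preimages} (which requires disjoint images) yields $y \in \Per(X_\vartheta)$ and $i \in \Z$ with $w := \sigma^i(x) \in \vartheta(y)$. The plan is to sharpen the bound on $\per(y)$ using constant length together with Proposition~\ref{PROP:const-length-dis-images}. Since $w \in \vartheta(y)$, for each $k \in \Z$ the inflation block $w_{[k\ell,(k+1)\ell-1]}$ is an element of $\vartheta(y_k)$. As $w$ is $p_{\min}$-periodic and $\ell p'$ is divisible by $p_{\min}$, we have
\[
w_{[k\ell,(k+1)\ell-1]} = w_{[(k+p')\ell,(k+p'+1)\ell-1]} \in \vartheta(y_{k+p'}),
\]
so this inflation word lies in $\vartheta(y_k) \cap \vartheta(y_{k+p'})$. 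Proposition~\ref{PROP:const-length-dis-images} then forces $y_k = y_{k+p'}$ for every $k$, hence $\sigma^{p'}(y) = y$ and $\per(y) \mid p' < p_{\min}$. This contradicts the minimality of $p_{\min}$.

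The main obstacle is the sharpening step in the non-coprime case: Proposition~\ref{PROP:periodic-preimages} alone only gives $\per(y) \leq \per(x)$, which is insufficient to contradict the minimality of $p_{\min}$. The key observation is that, in the constant length setting, disjoint images upgrades to a pointwise-unique preimage for each length-$\ell$ block, which lets the $p_{\min}$-periodicity of $x$ propagate faithfully to a $p'$-periodicity of $y$ after accounting for the $\ell$-alignment of the inflation decomposition.
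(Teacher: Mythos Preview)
Your proof is correct and follows essentially the same strategy as the paper's: both reduce to Lemma~\ref{LEM:main} in the coprime case and, in the non-coprime case, use disjoint images in the constant-length setting to show that a substitutive preimage of a $p$-periodic point has the strictly smaller period $p' = p/\gcd(p,\ell)$. The only difference is organisational: the paper iterates this descent explicitly to produce a strictly decreasing chain $p > p' > p'' > \cdots$ that must eventually hit a period coprime to $\ell$, whereas you start from a period of minimal length and obtain the contradiction after a single step. Your block-by-block argument that $y_k = y_{k+p'}$ (using that $p'\ell$ is a multiple of $p_{\min}$) is a slightly cleaner way to extract the $p'$-periodicity of the preimage than the paper's manipulation of powers and cyclic permutations of the periodic block, but the content is the same. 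One small remark: you do not actually need the full strength of Proposition~\ref{PROP:periodic-preimages} here --- the existence of any preimage $y \in X_\vartheta$ with $\sigma^i(x) \in \vartheta(y)$ (Proposition~\ref{PROP:rs-subs}) suffices, since your block argument then shows $y$ is periodic directly.
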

\begin{proof}
Let $x \in \Per_p(X_\vartheta)$ be a periodic element of prime period $p$ with prime periodic block $u$.
Suppose $\gcd(p,\ell) = d \geq 1$.
If $d = 1$, then we know that $x$ cannot contain $v$ as a subword by Theorem \ref{LEM:main}.
So we reach a contradiction to the assumption that every element of $X_\vartheta$ contains $v$ as a subword.
So suppose $d \geq 2$.
Write $p':= p/d$ and $\ell' := \ell/d$.
Note that $p' < p$ because $d \geq 2$.
Let $u' = u^{\ell'}$ which is a periodic block for $x$ of length
\[
|u'| = |u|\ell' = p\ell' = p'd\frac{\ell}{d} = p'\ell.
\]
As $x$ is in $X_\vartheta$, there exists an integer $i$ and an element $y \in X_\vartheta$ such that $\sigma^i(x) \in \vartheta(y)$.
We can then find a cyclic permutation $\beta(u')$ of the word $u'$ such that $\beta(u')$ is a concatenation of exact inflation words
\[
\beta(u') \in \vartheta(y_m) \cdots \vartheta(y_{m + p' - 1})
\]
because $|u'|$ has length $p'\ell$ and so is a multiple of $\ell$.

The word $w = y_{[m, m + p' - 1]}$ has length $p'$ and since $\vartheta$ has disjoint images and the periodic element $\sigma^i(x)$ is in $\vartheta(y)$, we know that $\sigma^j(y) = w^{\infty}$ for some shift $j \in \Z$, so $y$ has $p'$ as a period.
If $\gcd(p',l) = 1$ then by again applying Theorem \ref{LEM:main}, we may conclude that $y$ cannot contain $v$ as a subword and so we reach a contradiction.
It follows that $\gcd(p',\ell) = d' \geq 2$ and so we may repeat this procedure by defining $p'' := p'/d'$ and $\ell'' = p'/d'$ and in general $d^{(n)} = \gcd(p^{(n)}, \ell)$, $p^{(n+1)} := p^{(n)}/d^{(n)}$ and $l^{(n+1)} := l^{(n)}/d^{(n)}$.
Note however that, as $d^{(n)} \geq 2$ and $p^{(n)} > 0$ for all $n$, we have
\[
p > p' > p'' > \cdots > p^{(n)} > \cdots > 0
\]
which cannot happen for infinitely many $p^{(n)}$s and so at some point there must exist $N$ such that $d^{(N)} = 1$.
Hence, there exists some point $y^{(N)} \in X_\vartheta$ with period $p^{(N)}$ such that $\gcd(p^{(N)},\ell) = 1$ and hence cannot contain $v$ by Theorem \ref{LEM:main}.
We picked $x$ to be an arbitrary periodic element in $X_\vartheta$ and so it follows that no periodic elements in $X_\vartheta$ can exist.
Hence, $\Per(X_\vartheta) = \varnothing$.
\end{proof}
\begin{remark}
Note that the condition that every element of $X_\vartheta$ contains $v$ as a subword is necessary.
Even if almost every element contains $v$ as a subword, but there exists an element which has no appearance of $v$, then periodic elements can exist.
For example, if $\vartheta$ is the random period doubling substitution, even though $\vartheta$ is constant length with disjoint images and $bb \notin \vartheta(\mc A)$, the word $bb$ does not appear as a subword of every element of $X_\vartheta$, and hence Theorem \ref{THM:empty-periodic} does not apply.
So the periodic element $(aab)^\infty$ is free to exist (which it does).
\end{remark}

\begin{example}
Let $\vartheta \colon a \mapsto \{aabba, ababa\}, b \mapsto \{aaaaa\}$.
This is a primitive compatible random substitution of constant length $\ell = 5$ and $\vartheta$ has disjoint images by Proposition \ref{PROP:const-length-dis-images}.
Note that every element of $X_\vartheta$ clearly contains the word $ba$ because $X_\vartheta$ is non-empty and a transition from $b$s to $a$s must occur in every element as the longest legal string of consecutive $a$s in a row is $13$ and the longest legal string of consecutive $b$s in a row is $2$.
It follows that every element of $x$ must contain the word $aaaaaaabba \in \vartheta(ba)$ or the word $aaaaaababa \in \vartheta(ba)$ because $x$ has a legal preimage in $X_\vartheta$ which contains the word $ba$, hence $x$ contains a subword of the form $\vartheta(ba)$.
It follows that $x$ always contains the word $aaaab$ which has length $5$ and is not itself an inflation word.
Hence, by Corollary \ref{THM:empty-periodic}, $\Per(X_\vartheta) = \varnothing$.

Note that $\vartheta$ also happens to be globally uniquely recognisable and so we could have also shown that $\Per(X_\vartheta) = \varnothing$ using Proposition \ref{PROP:recog}.
\end{example}
We can slightly improve upon Theorem \ref{THM:empty-periodic} by allowing for a possible set of `unavoidable words'.
The proof is essentially identical.
\begin{coro}
Let $\vartheta$ be a primitive, compatible random substitution of constant length $\ell$ with disjoint images.
If every element of $X_\vartheta$ contains a word from the set $\mc L^\ell_{\vartheta} \setminus \vartheta(\mc A)$ as a subword, then $\Per(X_\vartheta) = \varnothing$.
\end{coro}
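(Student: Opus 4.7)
The plan is to follow the argument for Theorem \ref{THM:empty-periodic} almost verbatim, observing that Lemma \ref{LEM:main} already delivers a conclusion strong enough to handle the more general hypothesis. Recall that Lemma \ref{LEM:main} asserts that any $x \in \Per_p(X_\vartheta)$ with $\gcd(p, \ell) = 1$ can only have length-$\ell$ subwords that lie in $\vartheta(\mc A)$. In particular, such an $x$ contains no word from $\mc L^\ell_{\vartheta} \setminus \vartheta(\mc A)$ whatsoever, not merely the specific word $v$ used in the statement of the theorem. This is the one observation that lets the entire earlier proof run in this more flexible setting.

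Concretely, suppose for contradiction that $x \in \Per_p(X_\vartheta)$ exists with prime periodic block $u$, and let $d = \gcd(p, \ell)$. If $d = 1$, Lemma \ref{LEM:main} says $x$ contains no subword from $\mc L^\ell_{\vartheta} \setminus \vartheta(\mc A)$, contradicting the hypothesis applied to $x$ itself. Otherwise, reusing the construction from Theorem \ref{THM:empty-periodic}, set $p' = p/d$ and $\ell' = \ell/d$; then $u' := u^{\ell'}$ is a periodic block for $x$ of length $p' \ell$, so a suitable cyclic permutation $\beta(u')$ decomposes as a concatenation of exactly $p'$ inflation words coming from a factor $w = y_{[m, m+p'-1]}$ of a preimage $y \in X_\vartheta$ of some shift of $x$. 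By the disjoint images property, this preimage is forced, giving $\sigma^j(y) = w^\infty$ for some $j$, so that $y$ itself is periodic with period $p'$.

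Iterating this construction exactly as before gives preimages $y^{(n)} \in X_\vartheta$ of periods $p > p' > p'' > \cdots$ with $p^{(n+1)} = p^{(n)}/\gcd(p^{(n)}, \ell)$. Since each $p^{(n)}$ is a positive integer and the sequence is strictly decreasing as long as $\gcd(p^{(n)}, \ell) \geq 2$, we must reach some $N$ with $\gcd(p^{(N)}, \ell) = 1$. At that stage $y^{(N)} \in X_\vartheta$ is periodic of period coprime to $\ell$, so by Lemma \ref{LEM:main} its length-$\ell$ subwords all lie in $\vartheta(\mc A)$; this contradicts the hypothesis that every element of $X_\vartheta$ contains some word from $\mc L^\ell_{\vartheta} \setminus \vartheta(\mc A)$, now applied to $y^{(N)}$. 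Hence $\Per(X_\vartheta) = \varnothing$.

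There is essentially no new obstacle: the only thing to double-check is that the hypothesis of the corollary really does apply to each preimage $y^{(n)}$, which is automatic since every $y^{(n)}$ is itself an element of $X_\vartheta$. In particular, no single `unavoidable word' needs to be tracked through the iteration—the set $\mc L^\ell_{\vartheta} \setminus \vartheta(\mc A)$ is fixed, and at each stage Lemma \ref{LEM:main} rules out all of its elements simultaneously once the period becomes coprime to $\ell$.
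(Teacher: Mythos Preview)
Your proposal is correct and matches the paper's approach exactly: the paper simply remarks that the proof is essentially identical to that of Theorem~\ref{THM:empty-periodic}, and you have spelled out precisely this argument, including the key observation that Lemma~\ref{LEM:main} already excludes \emph{all} length-$\ell$ non-inflation words from a periodic point whose period is coprime to $\ell$.
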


\section{Enumerating periodic points}\label{SEC:enumerating-periodic}
Lemma \ref{LEM:preimages} and Proposition \ref{PROP:periodic-preimages} are the key to building an algorithm for testing if a particular word is a periodic block for an element of an RS-subshift $X_\vartheta$---ultimately allowing us to count the number of periodic points in $X_\vartheta$ with a specific period $p$.
In particular, given a compatible random substitution $\vartheta$ with disjoint images and a test word $u \in \mc A^n$, we can check whether $u$ is a periodic block for an element in $X_\vartheta$ using the following result.

\begin{theorem}\label{THM:enumeration-alg}
Let $\vartheta$ be a compatible random substitution with disjoint images.
There is a decidable procedure to determine whether the word $u$ is a prime periodic block for an element of $\Per(X_\vartheta)$.
\end{theorem}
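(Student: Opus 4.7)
The plan is to translate the question into a reachability-to-a-cycle problem on a finite directed graph built from the preimage structure given by Lemma~\ref{LEM:preimages} and Proposition~\ref{PROP:periodic-preimages}. Set $p = |u|$.

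First I would dispense with the straightforward necessary conditions. By Corollary~\ref{COR:divisible}, $\mathfrak{p}$ must divide $p$; by Lemma~\ref{PROP:periodic-block-letter-counts} the letter counts of $u$ are forced to be $|u|_{a_i} = (p/\mathfrak{p})\hat{\boldsymbol{R}}_i$; and for $u$ to be a \emph{prime} periodic block, $u$ must not be a proper power of a shorter word. Legality of $u$ is decidable: generate $\vartheta^k(\mc A)$ for any $k$ large enough that every word in $\vartheta^k(\mc A)$ has length strictly greater than $p$, and check whether $u$ appears as a subword of some element.

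Next, I would build a finite directed graph $G$ whose vertex set $V$ consists of all legal words of length at most $p$ that pass the necessary letter-count and period-divisibility conditions relative to their own length. For each $w \in V$ of length $k$, I add an edge $w \to v$ whenever there exist a cyclic permutation $\alpha^i(w)$ of $w$ and an integer $n \geq 1$ such that $\alpha^i(w)^n$ decomposes as an exact concatenation of inflation words whose preimage, read periodically, is $v$. The disjoint images hypothesis guarantees that, whenever such a decomposition exists, the preimage word is uniquely determined, and only finitely many $n$ need be tested since the preimage has length at most $p$ while the inflation words have bounded length. Thus $G$ is finite and its edges are effectively computable.

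I then claim that $u$ is a prime periodic block for an element of $\Per(X_\vartheta)$ if and only if $u \in V$ and the vertex $u$ reaches a directed cycle in $G$. The forward direction follows by iterated application of Proposition~\ref{PROP:periodic-preimages}: any periodic element with prime block $u$ admits a chain of periodic preimages of non-increasing prime period, which gives an infinite walk in $G$ starting at $u$, and by finiteness of $V$ this walk must enter a cycle. For the converse, given a cycle $v_0 \to v_1 \to \cdots \to v_m = v_0$ reachable from $u$, composing the inflation relations around the cycle yields $v_0^{N} \blacktriangleleft \vartheta^m(v_0^{N'})$ for suitable $N, N'$; iterating produces arbitrarily long powers of $v_0$ as subwords of legal words, so every finite power of $v_0$ is legal and hence $v_0^\infty \in X_\vartheta$. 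Following the forward path from the cycle back to $u$ under the substitution action and invoking closure of $X_\vartheta$ under $\vartheta$ from Proposition~\ref{PROP:rs-subs} then exhibits an element of $X_\vartheta$ with $u$ as a periodic block.

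The hard part will be the converse direction: making rigorous that a purely combinatorial cycle in $G$ gives a genuine bi-infinite element of $X_\vartheta$, and that the constructed element is positioned so that $u$ itself---rather than merely some cyclic permutation of it---appears as a periodic block. Once this is settled, the decision procedure is the obvious one: enumerate $V$, compute the edges of $G$ by trying all cyclic permutations and all bounded $n$, and run a reachability-to-cycle search. Every step operates on a finite, effectively generated object, so the procedure terminates.
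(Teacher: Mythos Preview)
Your proposal is correct and takes essentially the same approach as the paper: both iterate the preimage construction of Lemma~\ref{LEM:preimages} and Proposition~\ref{PROP:periodic-preimages} over words of length at most $p$, using disjoint images for uniqueness and finiteness of the search space to force either exhaustion or a loop, with a loop certifying that arbitrarily long powers of the looping word are legal and hence that the periodic point lies in $X_\vartheta$; your graph-reachability framing is just a clean repackaging of the paper's iterative procedure. Your flagged concern about recovering $u$ itself rather than a cyclic permutation is harmless, since if $\alpha^i(u)$ is a periodic block for some $x \in X_\vartheta$ then $u$ is a periodic block for a shift of $x$.
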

\begin{proof}
Let $u \in \mc A^p$ be a test word.
Without loss of generality, we may assume that $u \neq v^k$ for all words $v$ and $k \geq 2$.
Set $u_0 := u$.
If $(u_0)^\infty \in \Per(X_\vartheta)$ then $u_0 \in \mc L^p_\vartheta$.
It is easy to check if a particular word is legal for the substitution and so we reject $u_0$ if it is not legal, else we move to the next step.

Let $|\vartheta|$ denote the length of the longest inflation word for the substitution, $|\vartheta| := \max\{|u| : u \in \vartheta(\mc A)\}$.
If $(u_0)^\infty \in \Per(X_\vartheta)$ then by Lemma \ref{LEM:preimages}, there exists a cyclic permutation $\alpha^i$ and a natural number $n$ such that $\alpha^i((u_0)^n)$ is a concatenation of exact inflation words $\alpha^i((u_0)^n) \in \vartheta(a_1) \cdots \vartheta(a_k)$ with $a_1 \cdots a_k \in \mc L_\vartheta$ and $k \leq p$.
Note that $|\alpha^i((u_0)^n)| = np$ and
\[
|\vartheta(a_1) \cdots \vartheta(a_k)| \leq |\vartheta|k \leq |\vartheta|p
\]
and so we know that $n$ is bounded above by $|\vartheta|$, so we only need to check the words $\alpha^i((u_0)^j)$ for each $1 \leq j \leq |\vartheta|$ and $0 \leq i \leq p-1$ to see if any are legal concatenations of inflation words.
If no such legal concatenation of inflation words exists, then $(u_0)^\infty$ cannot be a legal periodic element of $X_\vartheta$.

Otherwise, for each pair $(i,j)$ there exists at most one word $u_{1,i,j}$ (by disjoint images) such that $\alpha^i((u_0)^j) \in \vartheta(u_{1,i,j})$ where $u_{1,i,j} \in \mc L^k_\vartheta$ for $k \leq p$ and by Proposition \ref{PROP:periodic-preimages}, if $(u_0)^\infty \in \Per(X_\vartheta)$, then the words $u_{1,i,j}$ are also periodic blocks for periodic elements in $\Per(X_\vartheta)$.

We repeat this procedure, building words $u_{m,i,j}$ which are to be tested if they are valid candidates for being periodic blocks.
As $|u_{m,i,j}| \leq p$ for all $m,i,j$, it follows that there are only finitely many candidate words and so either we run out of valid preimages, in which case $u$ is not a prime periodic block, or else we enter a loop.
If we enter a loop, that is if there exist $m_1 < m_2$ and $i_1, i_2, j_1, j_2$ such that $u_{m_1,i_1,j_1} = u_{m_2,i_2,j_2}$, then $u$ is a prime periodic block.
To see this, note that $u_{m_2,i_2,j_2}$ is legal, and there exists some $i$ and $k$ for which $\alpha^i((u_{m_1,i_1,j_1})^k) \in \vartheta(u_{m_2,i_2,j_2})$, but as $u_{m_1,i_1,j_1} = u_{m_2,i_2,j_2}$, then we can iterate this and so there exists an increasing sequence of integers $k_0, k_1, \ldots$ such that some cyclic permutation of the words $(u_{m_1,i_1,j_1})^{k_0}, (u_{m_1,i_1,j_1})^{k_1}, \ldots$ are all legal.
It follows that  $(u_{m_1,i_1,j_1})^\infty$ is in $X_\vartheta$ and, as $\alpha^i((u_0)^j) \in \vartheta^{m_1}(u_{m_1,i_1,j_1})$ for some $i, j$, we also have $(\alpha^i(u_0)^j)^\infty \in X_\vartheta$, hence $(u_0)^\infty \in X_\vartheta$ and so $u_0 = u$ is a prime periodic block for an element in $X_\vartheta$.
\end{proof}
We illustrate the procedure described in the above proof with an example.
\begin{example}
Let $\vartheta$ be the random period doubling substitution given by
\[
\vartheta \colon a \mapsto \{ab, ba\}, b \mapsto \{aa\}
\]
which is compatible and has disjoint images, so the procedure outlined in the proof of Theorem \ref{THM:enumeration-alg} can be applied.

Take as a test word $u = aabaababa$ which is certainly legal as $u \blacktriangleleft \vartheta^4(b)$.
As $|\vartheta| = 2$, we only need to check if $u^2$ is a concatenation of exact inflation words (and we do not actually need to check if $\alpha(u^2)$ is a concatenation of exact inflation words because $u^2$ is if and only if $\alpha(u^2)$ is).
The word $u^2 = aabaababaaabaababa$ is in $\vartheta(baaababaa)$.
We note that $\alpha^2(baaababaa) = aababaaba = u$.
It follows that we have entered a loop and so $u^\infty \in \Per_9(X_\vartheta)$.

Consider instead the word $v = aaabababa$ which is also legal as $v \blacktriangleleft \vartheta^4(a)$.
We have that $u^2 = aaabababaaaabababa$ is in $\vartheta(baaabbaaa)$.
The word $v_1 = baaabbaaa$ is legal, however $(v_1)^2 = baaabbaaabaaabbaaa$ is not legal because two appearances of $bb$ exist which are separated by an odd distance, hence one of them would have to be an exact inflation word, however $bb \notin \vartheta(\mc A)$.
As $v_1$ (and its cyclic permutes) is the only preimage of any cyclic permutation of $v^2$, it follows that $v^2$ cannot be a valid periodic block for a periodic element in $\Per(X_\vartheta)$.
\end{example}

Obviously, the procedure detailed above is not always the fastest way of showing that a particular word is a periodic block or not.
In practice, there are large classes of words for which other methods are faster.
For example, suppose $\vartheta$ also has constant length $\ell$ and that one already has a list of periodic blocks of length $p$.
Then one can find all periodic words of length $\ell p$ just by substituting all the periodic blocks of length $p$ in all possible ways and including all cyclic permutations of those words.
It is clear that all of these words are themselves periodic blocks.
In order to see why this includes every periodic block of length $\ell p$, note that a valid periodic block of length $\ell p$ must have at least one cyclic permutation which is a concatenation of exactly $p$ inflation words and moreover, by the disjoint image condition, at least one of these cyclic permutations can be written as a concatenation of exactly $p$ inflation words in such a way that the preimage is also a valid periodic block of length $p$ (because all preimages of $u^\infty$ are periodic by disjoint images and there exists at least one legal preimage).

Hence, in practice, when implementing a search for periodic words of a substitution of constant length $\ell$ such as the random period doubling substitution, one tends to treat periods of length a multiple of $\ell$ separately to those without $\ell$ as a divisor.

\begin{example}
Let $\vartheta$ be the random period doubling substitution given by
\[
\vartheta \colon a \mapsto \{ab, ba\}, b \mapsto \{aa\}.
\]
Suppose we have already been given the periodic word $u = aab$ of length $3$.
It can quickly be verified by just considering letter frequencies (as described by Proposition \ref{PROP:periodic-block-letter-counts}) that $u$ and its cyclic permutations are the only periodic blocks of length $3$ for $\vartheta$.

There are four substitutive images of $u$ under $\vartheta$ and they are given by
\[
\vartheta(aab) = \{ababaa, abbaaa, baabaa, babaaa\}
\]
We can then include all cyclic permutations of these words to find that the set of periodic blocks of length $p=6$ for $\vartheta$ is given by
\[
\left \{
\begin{array}{l}
ababaa, babaaa, abaaab, baaaba, aaabab, aababa, abbaaa, \\
bbaaaa, baaaab, aaaabb, aaabba, aabbaa, baabaa, aabaab, abaaba
\end{array}
\right \}
\]
and so $|\Per_6(X_\vartheta)| = 15$.

Notice that there are two elements in $\vartheta(aab)$ which are cyclic permutations of one another, and so it is not true that we can count the number of $6$ periodic words by just enumerating the number of choices we have for applying $\vartheta$ to each letter of $aab$ and then accounting for the number of cyclic permutations of these images, as this will lead to over-counting in general (even after taking into account that the word $baabaa$ is $3$-periodic and so only has three distinct cyclic permutations, not six).

Nevertheless, this method of counting at least gives us an upper bound for the number of periodic blocks of length $2p$ in terms of the number of periodic blocks of length $p$, with the only discrepancy being due to this double counting possibility.
That is,
\[
|\Per_{2p}| \leq  (2^{\frac{2p}{3}+1}p -1)|\Per_p|
\]
where $\Per_p := \Per_p(X_\vartheta)$ 
\end{example}

Using these methods, we can compute $\Per_p$ for any $p$ and from these we can also compute $\Orb_p := \{\text{orbits in $X_\vartheta$ of length $p$}\}$.
For the random period doubling substitution, we compute the following values.

\begin{center}
\begin{tabular}{|l|c|c||l|c|c|}
\hline
$p$ & $|\Per_p|$ & $ |\Orb_p|$ & $p$ & $|\Per_p|$ & $ |\Orb_p|$\\
\hline
3 & 3  & 1 & 24 & 176,391  & 7,334\\
6 & 15  & 2 & 27 & 1,533  & 56\\
9 & 21  & 2 & 30 & 216,030  & 7,179\\
12 & 375  & 30 & 33 & 10,992 & 333\\
15 & 108  & 7 & 36 & 19,375,935 & 538,143\\
18 & 2,427  & 133 & 39 & 24,612 & 631\\
21 & 402  & 19 & 42 & 13,106,514 & 312,050\\
\hline
\end{tabular}
\end{center}


\section*{Acknowledgements}
The author wishes to thank Philipp Gohlke and Timo Spindeler for helpful discussions and Scott Balchin and Franz G\"{a}hler for supporting the project with their programming skills.
This work is supported by the German Research Foundation (DFG) via the Collaborative Research Centre (CRC 1283).

\bibliographystyle{jis}
\bibliography{tilings}

\end{document}